\definecolor{shadecolor}{gray}{0.875}
\newcommand{\Z}{{\mathbb Z}}
\def\Ass{\operatorname{Ass}}
\def\sat{{\rm sat}}
\def\sdef{\operatorname{sdef}}
\def\sl{\operatorname{s\ell}}
\def\NP{\operatorname{NP}}
\def\SP{\operatorname {SP}}
\def\isdef{\operatorname {isdef}}
\def\inv{^{-1}}
\let\frak\mathfrak
\newcommand{\factor}[2]{\left. \raise 1pt\hbox{\ensuremath{#1}} \right/
        \hskip -2pt\raise -3pt\hbox{\ensuremath{#2}}}
\theoremstyle{plain} 
\newtheorem{thm}{Theorem}[section]
\newtheorem*{introthm*}{Theorem}
\newtheorem{question}{Question}
\newtheorem{cor}[thm]{Corollary}
\newtheorem{lem}[thm]{Lemma}
\newtheorem{prop}[thm]{Proposition}
\theoremstyle{definition}
\newtheorem{defn}[thm]{Definition}
\newtheorem{ex}[thm]{Example}
\newtheorem{rem}[thm]{Remark}
\numberwithin{equation}{section}  
\title{Symbolic Defect of Monomial Ideals}
\author[B. Oltsik]{Benjamin Oltsik}
\address{Department of Mathematics, University of Connecticut, Storrs, CT 06269}
\email{benjamin.oltsik@uconn.edu}
\subjclass[2020]{Primary: 13A02, 13E05, 13H15}
\keywords{Symbolic powers, symbolic defect, integral closure.}
\begin{document}
\begin{abstract} 
Given a monomial ideal $I$, we study two functions that quantify ways to measure the difference between symbolic powers and usual powers of $I$. In many cases we determine the asymptotic growth rate of these two functions. We also perform explicit computations by using the symbolic polyhedron. 
\end{abstract}

\maketitle

\section{Introduction} Symbolic powers of ideals $I$ (Definition \ref{def:sympower}), denoted $I^{(n)}$, in a Noetherian commutative ring $R$ have long been a topic of interest in commutative algebra \cite{SymPowerNotes}.  They are directly related to the study of primary decomposition of ideals. In algebraic geometry, a special case of symbolic powers appear as ideals of functions which vanish on a given variety to a certain order.  
The definition that we work with is (see Remark \ref{rem:otherdefinition}):
\[I^{(n)} \coloneqq \bigcap_{\mathfrak p \in \Ass(R/I)} I^n R_{\mathfrak p} \cap R.\]
It is natural to study the minimal set of generators for symbolic powers, but calculating such minimal generating sets is often difficult.  Even counting the minimal number of generators $\mu(I^{(n)})$ has proven challenging.  Dao-Montaño \cite{DM}, study this question asymptotically via the \textit{symbolic analytic spread}, which is essentially the growth rate of the function $n\mapsto \mu(I^{(n)})$.  In this paper, our first object of focus is the \textit{symbolic defect} function, originally defined in \cite{GGSV}:

\[\sdef_I(n)\coloneqq\mu\left(\factor{I^{(n)}}{I^n}\right).\]  We aim to study the asymptotic growth rate of symbolic defect for monomial ideals.
We will use the following asymptotic notation: for functions $f$ and $g$, we say
\begin{itemize}
    \item $f(n) = O(g(n))$ if $\displaystyle\lim_{n \to \infty} \frac {f(n)}{g(n)} < \infty$,
    \item $f(n) \sim g(n)$ if $\displaystyle\lim_{n \to \infty} \frac {f(n)}{g(n)} = 1$,
    \item $f(n) = \Theta(g(n))$ if $0 < \displaystyle\lim_{n \to \infty} \frac {f(n)}{g(n)} < \infty$.
\end{itemize}

A result of Drabkin and Guerrieri \cite{DG} implies that the symbolic defect function is eventually quasi-polynomial for monomial ideals.  In Section 3, we show,

\begin{thm}\label{thm:bigOsymdef} Let $I$ be a monomial ideal in $k[x_1, \ldots, x_r]$ without embedded primes.  Then $\sdef_I(n) = O(n^{r-2})$.
\end{thm}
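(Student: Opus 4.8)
The plan is to bound the (a priori larger) quantity $\mu(I^{(n)})$, since $I^{(n)}/I^n$ is a quotient of $I^{(n)}$ and hence $\sdef_I(n)\le\mu(I^{(n)})$. Write $\fm=(x_1,\dots,x_r)$. If $\fm\in\Ass(R/I)$, then because $I$ has no embedded primes $\fm$ must be the unique associated prime, so $I$ is $\fm$-primary and $I^{(n)}=I^nR_\fm\cap R=I^n$; thus $\sdef_I(n)=0$ and we are done. I therefore assume every $\fp\in\Ass(R/I)$ is a proper monomial prime $\fp_S\coloneqq(x_i:i\in S)$ with $S\subsetneq\{1,\dots,r\}$, and list them as $\fp_{S_1},\dots,\fp_{S_t}$. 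Each $\fp_{S_j}$ is then minimal over $I$, so the corresponding primary component of $I$ is $IR_{\fp_{S_j}}\cap R$; a routine computation with monomial localizations identifies it with $J_jR$, where $J_j$ is the monomial ideal of $A_j\coloneqq k[x_i:i\in S_j]$ whose exponent vectors form the upward closure in $\Z_{\ge 0}^{S_j}$ of the image of the exponent vectors of $I$ under the coordinate projection $\Z_{\ge 0}^r\to\Z_{\ge 0}^{S_j}$, and $J_j$ is $\fm_j$-primary for $\fm_j\coloneqq(x_i:i\in S_j)$. Running the same computation with $I^n$ in place of $I$ yields
\[
I^{(n)}\;=\;\bigcap_{j=1}^{t}\bigl(I^nR_{\fp_{S_j}}\cap R\bigr)\;=\;\bigcap_{j=1}^{t}J_j^{\,n}R,
\]
so from here the problem is purely combinatorial.

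Next I would record two elementary facts about a minimal monomial generator $x^a$ of $I^{(n)}$. First, its exponent vector lies in a fixed box $[0,Dn]^r$ with $D$ depending only on $I$: for each $i$ in the support of $a$, minimality forces some index $j$ with $i\in S_j$ for which $a$, restricted to the coordinates of $S_j$, lies on the lowest layer of $J_j^{\,n}R$ in the direction of $x_i$; and since $J_j$ is $\fm_j$-primary one has $x_i^{\,t}\in J_j^{\,n}$ exactly when $t\ge n\cdot\min\{s:x_i^{\,s}\in J_j\}$, so $a_i=O(n)$. Second — and this is the observation that forces the exponent $r-2$ rather than $r-1$ — the same lowest-layer condition determines $a_i$ as a single-valued function of the remaining coordinates $(a_k)_{k\in S_j\setminus\{i\}}$ of that restriction, because the ``wall'' of an upward-closed subset of $\Z_{\ge 0}^{S_j}$ in a single coordinate direction is single-valued (and nonempty, by $\fm_j$-primality of $J_j$).

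The core of the argument is then to count the minimal generators of $I^{(n)}$ organized by their support $T\subseteq\{1,\dots,r\}$. When $|T|\le r-1$, restricting exponent vectors to the coordinates of $T$ injects these generators into the minimal generating set of a monomial ideal in $|T|$ variables whose generators all lie in $[0,Dn]^{|T|}$; a minimal generating set is an antichain, and an antichain in $[0,N]^s$ has at most $(N+1)^{s-1}$ members, so this contributes $O(n^{|T|-1})=O(n^{r-2})$, and there are only finitely many $T$. When $T=\{1,\dots,r\}$ I choose any coordinate $i_0$, then an index $j$ with $i_0\in S_j$ witnessing the lowest-layer condition in the $x_{i_0}$-direction; since $S_j\subsetneq\{1,\dots,r\}$ I may choose $i_1\notin S_j$, and then an index $j'$ with $i_1\in S_{j'}$ witnessing the lowest-layer condition in the $x_{i_1}$-direction. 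Because $i_1\notin S_j$, the coordinate $a_{i_0}$ is a function of $(a_k)_{k\in S_j\setminus\{i_0\}}$, a tuple that does not involve $a_{i_1}$, so $a_{i_0}$ is a function of the $r-2$ coordinates $(a_k)_{k\notin\{i_0,i_1\}}$; and then $a_{i_1}$, being a function of $(a_k)_{k\in S_{j'}\setminus\{i_1\}}$ with $a_{i_0}$ already known, is in turn a function of those same $r-2$ coordinates. Hence, for each of the finitely many ``types'' $(i_0,j,i_1,j')$, the generators of that type inject into $[0,Dn]^{r-2}$, contributing $O(n^{r-2})$. Summing the finitely many contributions gives $\mu(I^{(n)})=O(n^{r-2})$ and therefore $\sdef_I(n)=O(n^{r-2})$; combined with the quasi-polynomiality of $\sdef_I$ from Drabkin--Guerrieri, this even shows $\sdef_I$ is eventually a quasi-polynomial of degree at most $r-2$.

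The step I expect to be the main obstacle is the full-support case, and in particular the choice of the \emph{two distinct} sets $S_j,S_{j'}$: one must see that a full-support minimal generator cannot be pinned down by a single $S_j$ — which is precisely the hypothesis that no associated prime equals $\fm$, i.e.\ that every $S_j\subsetneq\{1,\dots,r\}$, and this is exactly what makes the choice of $i_1\notin S_j$ possible — and one must order the deductions so that $a_{i_0}$ is fixed before $a_{i_1}$ is needed, which forces $i_1$ to lie outside $S_j$ rather than merely to differ from $i_0$. The auxiliary facts (the identification $I^{(n)}=\bigcap_j J_j^{\,n}R$ by monomial localization, single-valuedness of the coordinate walls of upward-closed sets, and the antichain bound in a box) are routine but need care.
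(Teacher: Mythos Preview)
Your argument is correct, and it takes a genuinely different route from the paper. Both proofs share the same outer shell---handling the case $\fm\in\Ass(R/I)$ separately (where $\sdef_I\equiv 0$) and then bounding $\sdef_I(n)\le\mu(I^{(n)})$---but the paper obtains $\mu(I^{(n)})=O(n^{r-2})$ by quoting the Dao--Monta\~no inequality $s\ell(I)\le r-\lfloor(r-1)/\operatorname{bigHeight} I\rfloor$ (Theorem~\ref{thm:slbound}), which gives $s\ell(I)\le r-1$ once $\fm$ is not associated, and then uses that $\mu(I^{(n)})$ grows like $n^{s\ell(I)-1}$. Your proof instead gives a direct, self-contained combinatorial count: you identify $I^{(n)}=\bigcap_j J_j^{\,n}R$ via monomial localization (this is the paper's Lemma~\ref{primdecomppowers}), then exploit that each $S_j\subsetneq\{1,\dots,r\}$ to show that every full-support minimal generator is determined, within finitely many ``types'' $(j,j')$, by only $r-2$ of its coordinates. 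The paper's route is shorter and, via bigHeight, yields a sharper exponent when the associated primes are small; your route is more elementary, avoids the symbolic analytic spread machinery entirely, and in effect reproves the special case $s\ell(I)\le r-1$ of Dao--Monta\~no by hand. The closing remark about quasi-polynomial degree is a pleasant bonus but is not part of the paper's statement.
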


One of the classical avenues for studying asymptotic questions is via convex polyhedra such as the the Newton and symbolic polyhedra \cite{HS}, \cite{symPowerMon}. This approach inspires the study of a new invariant, the \textit{integral symbolic defect} (Definition \ref{isdef}):
\[{\rm isdef}_I(n)\coloneqq\mu\left(\factor{\overline{I^{(n)}}}{\overline{I^n}}\right).\]  
While there is no immediate comparison between $\sdef_I(n)$ and $\isdef_I(n)$, we prove that their $O$-asymptotic growth is the same in a special case:
\begin{thm}\label{thm:isdef} Let $I$ be a monomial ideal in $R = k[x_1, \ldots, x_r]$ such that $I^{(n)} = (I^n)^{\operatorname{sat}}$ for all $n\geq 1$ (e.g., when $\dim (R/I) = 1$).  Then $\isdef_I(n) = O(n^{r-2})$.
\end{thm}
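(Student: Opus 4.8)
The plan is to rephrase everything in terms of Newton polyhedra. For a monomial ideal $J$ write $\NP(J)$ for its Newton polyhedron, so that $\overline{J}$ is generated by the lattice points of $\NP(J)$ and $\NP(J^m)=m\cdot\NP(J)$. The first ingredient is an elementary lemma: $\NP(J^{\sat})$ is obtained from $\NP(J)$ by deleting exactly the facets whose inner normal is strictly positive (equivalently, the bounded facets). One proves this from $J^{\sat}=J:\m^\infty$ and the description of the operations $J\mapsto J:x_i^\infty$ on exponent sets; it is also implicit in the symbolic-polyhedron computations of \cite{HS,symPowerMon}. Since dilating a polyhedron by $n$ changes neither its facets nor which of them are bounded, applying the lemma with $J=I^n$ and using $\NP(I^n)=n\cdot\NP(I)$ gives $\NP\big((I^n)^{\sat}\big)=n\cdot\NP(I^{\sat})$. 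Combining this with the hypothesis $I^{(n)}=(I^n)^{\sat}$ yields, for all $n\ge 1$,
\[
\overline{I^{(n)}}=\overline{(I^{\sat})^{n}},
\]
i.e.\ $\overline{I^{(n)}}$ is generated by the lattice points of $n\cdot P$, where $P:=\NP(I^{\sat})$.

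Two features of $P$ drive the estimate. First, $\NP(I)\subseteq P$ and $P\setminus\NP(I)$ is \emph{bounded}: it is contained in the finite union, over the facets $F=\{\langle v_F,x\rangle\ge c_F\}$ of $\NP(I)$ that were deleted, of the polytopes $\{x\in\mathbb{R}^r_{\ge 0}:\langle v_F,x\rangle<c_F\}$, each bounded because $v_F>0$. Second, every facet of $P$ has a normal that is \emph{not} strictly positive, since the facets of $P$ form a subcollection of the undeleted facets of $\NP(I)$; so for $r\ge 2$ no facet of $P$ has full support -- geometrically, every facet of $P$ runs off to infinity in some coordinate direction.

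Now the count. From $I^n\subseteq I^{(n)}$ we get $\overline{I^n}\subseteq\overline{I^{(n)}}$, and since these are monomial ideals, $\isdef_I(n)=\mu\big(\overline{I^{(n)}}/\overline{I^n}\big)$ is precisely the number of minimal generators $x^a$ of $\overline{I^{(n)}}$ with $x^a\notin\overline{I^n}$, i.e.\ with $a\in nP\setminus n\cdot\NP(I)=n\,(P\setminus\NP(I))$. By the first feature these $a$ lie in a cube of side $Cn$ for a fixed $C$. Fix such an $a$. For each coordinate $i$, minimality gives $a-e_i\notin nP$, so some defining inequality $\langle v_F,x\rangle\ge nc_F$ of $nP$ has $(v_F)_i>0$, holds at $a$, and fails at $a-e_i$; then $0\le\langle v_F,a\rangle-nc_F<(v_F)_i$, which places $a$ within a bounded distance (independent of $n$) of that facet's hyperplane. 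By the second feature the facet has non-full support, so as $i$ varies these facets cannot all coincide, and we obtain two distinct facets $F,F'$ whose normals are positive and non-proportional, hence linearly independent; therefore $a$ lies within bounded distance of the codimension-two affine subspace $H_F\cap H_{F'}$. Consequently $\isdef_I(n)$ is at most the sum, over the finitely many unordered pairs of distinct facets of $P$, of the number of integer points lying in the cube of side $Cn$ and within a bounded distance of the corresponding dilated codimension-two subspace; each such number is $O(n^{r-2})$ (a bounded neighbourhood of a codimension-two affine subspace meets a cube of side $Cn$ in $O(n^{r-2})$ lattice points), so $\isdef_I(n)=O(n^{r-2})$.

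The hard part is the first lemma -- that $\m$-saturation of a monomial ideal amounts, on Newton polyhedra, to deleting the bounded facets. It needs genuine care because integral closure does not commute with intersection (one can have $\overline{J_1\cap J_2}\subsetneq\overline{J_1}\cap\overline{J_2}$), so the lemma cannot be read off a primary decomposition of $J$ and must be argued through $J:\m^\infty$ directly. The other point requiring attention is the uniform-in-$n$ lattice-point bookkeeping at the end. Finally, it is worth noting where the hypotheses are used: boundedness of $P\setminus\NP(I)$ confines all the new generators to a region of size $O(n)$, while the absence of full-support facets of $P$ is exactly what forces the \emph{two}-facet, codimension-two constraint on each new generator -- and hence the exponent $r-2$ rather than the a priori bound $r-1$.
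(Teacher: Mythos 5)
Your key lemma---that $\NP(J^{\sat})$ is $\NP(J)$ with its bounded facets deleted---is false, and the conclusion $\overline{I^{(n)}} = \overline{(I^{\sat})^n}$ fails with it. Take $I = (xy,xz,yz) = (x,y)\cap(x,z)\cap(y,z)$ in $k[x,y,z]$. Here $\frak m\notin\Ass(R/I)$, so $I^{\sat}=I$ and $\NP(I^{\sat})=\NP(I)$ still has the bounded facet $u+v+w\ge 2$, whereas your lemma would delete it. Downstream, $\overline{I^{(n)}}=\overline{(I^{\sat})^n}=\overline{I^n}$ would force $\isdef_I(n)=0$ for all $n$, but $xyz\in\overline{I^{(2)}}\setminus\overline{I^2}$. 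The underlying confusion is between $\overline{J^{\sat}}$ and $\overline{J}^{\,\sat}$: deleting the bounded facets of $\NP(J)$ gives a polyhedron whose lattice points are exactly $\overline{J}:\frak m^\infty$, not $\overline{J:\frak m^\infty}$, and while $\overline{J^{\sat}}\subseteq\overline{J}^{\,\sat}$ always holds, the inclusion can be strict because the enlarged polyhedron may acquire non-integral vertices and so be strictly larger than the Newton polyhedron of any monomial ideal. Consequently $\frac1n\NP((I^n)^{\sat})$ need not be a fixed dilate of $\NP(I^{\sat})$ and your reduction to a single body $P=\NP(I^{\sat})$ does not go through.

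The correct body to use is the symbolic polyhedron $\SP(I)$. Theorem \ref{thm:SPIcont} gives that $\overline{I^{(n)}}$ is exactly the set of lattice points of $n\SP(I)$, and under the hypothesis $I^{(n)}=(I^n)^{\sat}$ boundedness of $\SP(I)\setminus\NP(I)$ follows from Lemma \ref{lem:satFinite}. Your ``second feature'' is true for $\SP(I)$---no facet has a strictly positive normal---though not for the reason you give (the facets of $\SP(I)$ are generally not a subcollection of those of $\NP(I)$); rather, $\SP(I)=\bigcap_{\frak p\in\max(I)}\NP(Q_{\subseteq\frak p})$ with each $\frak p\neq\frak m$ (the case $\frak m\in\Ass(R/I)$ is trivial by Lemma \ref{primdecomppowers}), and each $Q_{\subseteq\frak p}$ omits the variables outside $\frak p$, so every bounding hyperplane is parallel to some coordinate direction. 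With $P=\SP(I)$, the remainder of your argument---each minimal new generator lies within a bounded distance of two non-proportional facet hyperplanes of $n\SP(I)$, hence near a codimension-two affine subspace, giving $O(n^{r-2})$ lattice points---is essentially the same strategy as the paper's proof, which projects the bounded pieces of $\SP(I)\setminus\NP(I)$ onto parallel coordinate hyperplanes and counts lattice points in thin slabs there.
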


In Sections 5 and 6 we turn our attention to the problem of computing $\sdef_I(n)$, or at least computing its leading term and a quasi-period. We say \textit{a} quasi-period because it may not be minimal, but it is a multiple of the minimal quasi-period.  In Section 6 we do this for a special class of monomial ideals.

\begin{ex}\label{ex:sdefCalc} Let $I = (x^a, y) \cap (y^b, z) \cap (z^c, x)$.  Then 
\begin{enumerate}[(i)]
    \item $\sdef_I(n) \sim (\alpha + \beta + \gamma)n$, where $( \alpha, \beta, \gamma) = \left( \frac{a(bc - b + 1)}{abc + 1}, \frac{b(ac - c + 1)}{abc + 1}, \frac{c(ab - a + 1)}{abc + 1} \right)$.
\item A quasi-period of $\sdef_I(n)$ is $abc+1$.
\end{enumerate}
\end{ex}

In Section 5, we use the symbolic polyhedron to calculate the exact symbolic defect function of $I = (xy, yz, xz)$. This strengthens the case $a=b=c=1$ of Example \ref{ex:sdefCalc}, as well as recovers the results of \cite{Montero}:
\begin{ex}\label{prop:easyExsdef}  For $I = (xy, xz, yz)$,
\[
    \sdef_I(n) = \begin{cases} \frac32 n - 2 & n \equiv 0 \pmod 2\\
    \frac 32n - \frac 32 & n \equiv 1 \pmod 2
    \end{cases}.
\]
\end{ex}

\noindent We end with a list of open questions in Section 7.

\section{Background}\label{sec::background}
\begin{defn}\label{def:sympower} Let $R$ be a commutative, unital, Noetherian ring, and let $I$ be an ideal.  We define the $n$-th \textit{symbolic power} of $I$ to be
\[
    I^{(n)} := \bigcap_{\mathfrak p \in \Ass(R/I)} I^n R_{\mathfrak p} \cap R.
\]
    
\end{defn}

\begin{rem}\label{rem:otherdefinition}
    The literature has a competing definition where the intersection is taken over just the minimal primes of $I$.  In the case where $I$ has no embedded primes, these definitions coincide, but they may differ in general.  For example, $I = (x^2, xy)$ has $I^{(n)} = (x^2, xy)^n$ by our definition, but with the alternate definition, the symbolic powers would be $(x)^n$.
\end{rem} 

Another way to represent symbolic powers is via a saturation.  Indeed, a result from McAdam \cite{McAdam} states the associated primes of $R/I^n$ eventually stabilize for large enough $n$.  Let $A^*(R/I)$ be this stabilized set, called the \textit{asymptotic primes} of $I$.  
\begin{lem}\cite[Lemma 2.2]{HJKN} Let \[J = \bigcap\limits_{\substack{\mathfrak p \in A^*(R/I) \setminus \Ass(R/I) \\ \operatorname{depth}(\mathfrak p, R/I) \ge 1}} \mathfrak p.\]  
Then,
\[
    I^{(n)} = (I^n : J^\infty) = \bigcup_{i \ge 1} (I^n : J^i).
\]
    
\end{lem}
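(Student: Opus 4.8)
The plan is to translate both sides into torsion submodules of $R/I^n$ and then compare associated primes. Write $W \coloneqq R \setminus \bigcup_{\mathfrak q \in \Ass(R/I)} \mathfrak q$ for the set of nonzerodivisors on $R/I$. The first step is the identity $I^{(n)} = I^n W\inv R \cap R$: for $x \in R$ one has $x \in I^{(n)}$ exactly when $(I^n : x) \not\subseteq \mathfrak q$ for every $\mathfrak q \in \Ass(R/I)$, and by prime avoidance this is the same as $(I^n : x) \cap W \ne \emptyset$. Equivalently, $I^{(n)}/I^n$ is the $W$-torsion submodule $\Gamma_W(R/I^n) = \{\, m \in R/I^n : wm = 0 \text{ for some } w \in W \,\}$ of $R/I^n$. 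Note also that $\operatorname{depth}(\mathfrak p, R/I) \ge 1$ is equivalent to $\mathfrak p$ containing a nonzerodivisor on $R/I$, i.e. to $\mathfrak p \cap W \ne \emptyset$; this is how the depth condition in the definition of $J$ enters.

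For the containment $(I^n : J^\infty) \subseteq I^{(n)}$, which I would establish for every $n \ge 1$: if the index set of the intersection defining $J$ is empty then $J = R$ and $(I^n : J^\infty) = I^n \subseteq I^{(n)}$. Otherwise fix $\mathfrak q \in \Ass(R/I)$; each prime $\mathfrak p$ occurring in $J$ has $\operatorname{depth}(\mathfrak p, R/I) \ge 1$, hence contains a nonzerodivisor on $R/I$ and so $\mathfrak p \not\subseteq \mathfrak q$, and since $\mathfrak q$ is prime, $J \not\subseteq \mathfrak q$. Choosing $a \in J \setminus \mathfrak q$, any $x$ with $x J^k \subseteq I^n$ satisfies $x a^k \in I^n$ with $a^k$ a unit in $R_{\mathfrak q}$, so $x \in I^n R_{\mathfrak q} \cap R$; as $\mathfrak q$ was arbitrary, $x \in I^{(n)}$.

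For the reverse containment I would show $J^k \cdot (I^{(n)}/I^n) = 0$ for $k \gg 0$; since $I^{(n)}/I^n$ is finitely generated this is equivalent to $J \subseteq \mathfrak p$ for every $\mathfrak p \in \Ass(I^{(n)}/I^n)$ (equivalently, for every minimal prime of its support). Using $I^{(n)}/I^n = \Gamma_W(R/I^n)$ and the standard fact that the associated primes of a $W$-torsion submodule of a module $N$ are precisely those $\mathfrak p \in \Ass(N)$ with $\mathfrak p \cap W \ne \emptyset$, one gets $\Ass(I^{(n)}/I^n) = \{\, \mathfrak p \in \Ass(R/I^n) : \operatorname{depth}(\mathfrak p, R/I) \ge 1 \,\}$. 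Once $n$ is past the bound after which $\Ass(R/I^n)$ has stabilized (McAdam's result recalled above), $\Ass(R/I^n) = A^*(R/I)$, and any such $\mathfrak p$ then lies in $A^*(R/I) \setminus \Ass(R/I)$ with $\operatorname{depth}(\mathfrak p, R/I) \ge 1$, so $\mathfrak p$ is exactly one of the primes in the intersection defining $J$; hence $J \subseteq \mathfrak p$, and the containment follows. Finally, $(I^n : J^\infty) = \bigcup_{i \ge 1}(I^n : J^i)$ is immediate, the ascending union being finite because $R$ is Noetherian.

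The main obstacle is the reverse containment for small $n$: one must still rule out a $\mathfrak p \in \Ass(R/I^n)$ with $\operatorname{depth}(\mathfrak p, R/I) \ge 1$ that contains none of the asymptotic primes defining $J$ — a ``transient'' associated prime of a low power having positive depth on $R/I$ yet lying below no eventual associated prime. I would try to dispose of this either by localizing at a minimal element $\mathfrak p$ of $\Ass(I^{(n)}/I^n)$ and passing to the stable range of powers inside $R_{\mathfrak p}$, or by using Ratliff's monotonicity of $\Ass(R/\overline{I^m})$ to bound $\Ass(R/I^n)$ uniformly in $n$; I expect this to be the technical heart of the argument, the rest being formal.
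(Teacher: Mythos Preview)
The paper does not supply a proof of this lemma at all: it is quoted verbatim as \cite[Lemma 2.2]{HJKN} and used only to justify Remark~\ref{rem:satSym}. There is therefore no ``paper's own proof'' to compare against; the intended reference is the original argument in H\`a--Jayanthan--Kumar--Nguyen.

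As for your proposal itself, the forward containment $(I^n:J^\infty)\subseteq I^{(n)}$ and the identification $I^{(n)}/I^n=\Gamma_W(R/I^n)$ are clean and correct, and for $n$ in the stable range your associated-prime computation gives exactly what is needed. The obstacle you flag is genuine: with $A^*(R/I)$ defined in this paper as the \emph{stable} value of $\Ass(R/I^n)$ (not the union over all $n$), nothing you have written excludes a sporadic prime $\mathfrak p\in\Ass(R/I^n)\setminus A^*(R/I)$ with $\operatorname{depth}(\mathfrak p,R/I)\ge 1$ for some small $n$, and such a prime need not contain any of the primes defining $J$. Your localization idea does not obviously close this, because after localizing at $\mathfrak p$ you must still control $\Ass(R_\mathfrak p/I_\mathfrak p^n)$ for that same small $n$, and Ratliff's monotonicity applies to $\Ass(R/\overline{I^m})$, not to $\Ass(R/I^m)$. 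The honest resolution is to consult the HJKN proof directly: either their $A^*$ is the union $\bigcup_n\Ass(R/I^n)$ rather than the stable set (in which case your argument goes through for all $n$ immediately, since every such $\mathfrak p$ is then one of the primes defining $J$), or they impose a standing hypothesis that handles the transient primes.
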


\begin{rem}\label{rem:satSym}
Note that if $J = \mathfrak m$, then $I^{(n)} = (I^n)^{\text{sat}}$ for all $n$.
\end{rem}

To ordinary powers and symbolic powers, we define the two graded algebras: $\mathcal R(I) = \oplus_{n \ge 0} I^nt^n$, the Rees algebra, and $\mathcal R_s(I) = \oplus_{n \ge 0} I^{(n)}t^n$, the symbolic Rees algebra.  While the former is always Noetherian, in general, the latter may not be (see Example 2.4 in \cite{GS}). Denote $\ell(I) = \dim (\mathcal R(I) \otimes R/\mathfrak m)$, and $s\ell(I) = \dim(\mathcal R_s(I) \otimes R/\mathfrak m)$, called the {\it analytic spread of $I$} and {\it symbolic analytic spread of $I$}, respectively.

For any $R$-module $M$, denote by $\mu(M)$ the minimal number of generators of $M$.  For ideals $I$, the growth rate of $\mu(I^n)$ and $\mu(I^{(n)})$ have been studied.  Indeed, from Nakayama's lemma, we have that $\ell(I) = \inf\{t \in \mathbb R : \mu(I^n) = O(n^{t-1})\}$, and, for when $\mathcal R_s(I)$ is Noetherian, $s\ell(I) = \inf\{t \in \mathbb R : \mu(I^n) = O(n^{t-1})\}$ \cite{DM} are well-known invariants, particularly $\ell(I)$.  We wish to expand the study of these invariants in regards to the following function:

\begin{defn}
[Galetto-Geramita-Shin-Van Tuyl \cite{GGSV}]
\label{def:sdef}
 Let $R$ be either local or graded with unique homogeneous maximal ideal, $\mathfrak m$, and residue field $k$.  The {\em symbolic defect function} of a homogeneous ideal $I$ is the numerical function
\[
\sdef_I:\Z_{\ge 0} \to \Z_{\ge 0}, \quad \sdef_I(n):=\mu\left(\factor{{I^{(n)}}}{{I^n}}\right)=\dim_k\left(\frac{I^{(n)}}{I^n +\mathfrak{m}I^{(n)}} \right) < \infty.
\]
\end{defn}
This function is designed to be a measurement of ``closeness" between $I^{(n)}$ and $I^n$. 

A result of Drabkin and Guerrieri \cite[Theorem 2.4]{DG} states that, for a homogeneous ideal $I$ such that $\mathcal R_s(I)$ is Noetherian, $\sdef_I(n)$ is eventually quasi-polynomial. Note that this result is not dependent on field characteristic or cardinality.  A useful fact about monomial ideals is, when we view them as homogeneous ideals in $k[x_1, \ldots, x_r]$, that $\mathcal R_s(I)$ is always Noetherian \cite[Theorem 3.2]{HHT}.  This allows us to invoke Drabkin and Guerrieri's theorem to conclude that $\sdef_I(n)$ is eventually quasi-polynomial for any monomial ideal.  In this paper, we wish to determine an upper-bound of the degree(s) of the quasi-polynomial in this paper, specifically for monomial ideals in polynomial rings.  

For monomial ideals, there are many other results at our disposal regarding symbolic powers.  For example, the following allows for much easier calculations:

\begin{lem}\label{primdecomppowers} \cite[Lemma 3.1]{HHT} Let $I$ be a monomial ideal with monomial primary decomposition, $I = Q_1 \cap \cdots \cap Q_s$.  Set $\max(I)$ to be the set of maximal associated primes, and, for each $\mathfrak p \in \max(I)$, let $Q_{\subseteq \mathfrak p} = \bigcap\limits_{\sqrt{Q_i} \subseteq \mathfrak p} Q_i$.  Then,
\[
    I^{(n)} = \bigcap_{\mathfrak p \in \max(I)} (Q_{\subseteq \mathfrak p})^n.
\]

In particular, if $I$ does not have embedded primes, then $I^{(n)} = Q_1^n \cap \cdots \cap Q_s^n$.
\end{lem}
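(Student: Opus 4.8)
\textbf{Proof proposal for Lemma \ref{primdecomppowers}.}

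The plan is to work locally at each associated prime and compare the two intersections. First I would recall that for a monomial ideal $I = Q_1 \cap \cdots \cap Q_s$ with monomial primary decomposition, $\Ass(R/I) = \{\sqrt{Q_1}, \ldots, \sqrt{Q_s}\}$, and each $\sqrt{Q_i}$ is a monomial prime, hence generated by a subset of the variables. The key observation is that for a monomial prime $\fp$ and a monomial primary ideal $Q_i$, one has $Q_i R_\fp = R_\fp$ unless $\sqrt{Q_i} \subseteq \fp$, in which case $Q_i R_\fp \cap R = Q_i$ (this uses that $Q_i$ is $\sqrt{Q_i}$-primary, so contracting from the localization at any prime containing $\sqrt{Q_i}$ returns $Q_i$). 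Consequently, for any monomial prime $\fp$,
\[
I^n R_\fp \cap R = \left( \bigcap_i Q_i \right)^n R_\fp \cap R,
\]
and I would want to replace $(\bigcap_i Q_i)^n R_\fp$ with $\bigcap_i (Q_i^n R_\fp)$. Since localization is exact and commutes with finite intersections, $I^n R_\fp = (\bigcap Q_i)^n R_\fp$, but passing the $n$-th power inside the intersection is not automatic; however, after localizing at $\fp$ the ideals $Q_i$ with $\sqrt{Q_i} \not\subseteq \fp$ become the unit ideal, and for the remaining ones I would use that $(Q_\fp)^n \subseteq \bigcap Q_i^n R_\fp$ together with the reverse containment that holds for monomial ideals — more carefully, I would instead directly compute $I^n R_\fp \cap R$ in terms of the $Q_i$'s.

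Next I would assemble the pieces. By Definition \ref{def:sympower},
\[
I^{(n)} = \bigcap_{\fp \in \Ass(R/I)} \left( I^n R_\fp \cap R \right).
\]
For a fixed $\fp \in \Ass(R/I)$, localizing kills all $Q_i$ with $\sqrt{Q_i} \not\subseteq \fp$, so $I^n R_\fp = \big(\bigcap_{\sqrt{Q_i} \subseteq \fp} Q_i\big)^n R_\fp = (Q_{\subseteq \fp})^n R_\fp$, whence $I^n R_\fp \cap R = (Q_{\subseteq \fp})^n R_\fp \cap R$. Now I claim that for a monomial ideal $J$ all of whose associated primes are contained in the monomial prime $\fp$, one has $J R_\fp \cap R = J$: this is because such a $J$ has no associated prime disjoint from $\fp$, so no element of $R \setminus \fp$ is a zerodivisor "relevant" to any primary component — concretely, each primary component $Q$ of $J$ satisfies $\sqrt{Q} \subseteq \fp$ hence $Q R_\fp \cap R = Q$, and contraction commutes with the finite intersection $J = \bigcap Q$. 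Applying this with $J = (Q_{\subseteq \fp})^n$ (whose associated primes are again among the $\sqrt{Q_i} \subseteq \fp$, since the associated primes of a power of a monomial ideal are contained in the associated primes of... — here I should be careful, as powers can gain embedded primes, but all such embedded primes of $(Q_{\subseteq \fp})^n$ are still contained in the union of the $\sqrt{Q_i} \subseteq \fp$, hence in $\fp$) gives $I^n R_\fp \cap R = (Q_{\subseteq \fp})^n$.

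Then the symbolic power becomes $I^{(n)} = \bigcap_{\fp \in \Ass(R/I)} (Q_{\subseteq \fp})^n$, and I would reduce the index set from all of $\Ass(R/I)$ to just $\max(I)$: if $\fp \subseteq \fp'$ are both associated, then $Q_{\subseteq \fp} \supseteq Q_{\subseteq \fp'}$, so $(Q_{\subseteq \fp})^n \supseteq (Q_{\subseteq \fp'})^n$ and the term for $\fp$ is redundant. This yields $I^{(n)} = \bigcap_{\fp \in \max(I)} (Q_{\subseteq \fp})^n$. For the "in particular" statement, when $I$ has no embedded primes every $\sqrt{Q_i}$ is minimal, so for $\fp = \sqrt{Q_i} \in \max(I)$ the only primary component with radical contained in $\fp$ is $Q_i$ itself, giving $Q_{\subseteq \fp} = Q_i$ and hence $I^{(n)} = Q_1^n \cap \cdots \cap Q_s^n$. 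The main obstacle I anticipate is the careful bookkeeping around associated primes of powers $(Q_{\subseteq\fp})^n$ — I need that every such (possibly embedded) associated prime is still contained in $\fp$, which follows because $(Q_{\subseteq\fp})^n$ is supported on $V(Q_{\subseteq\fp}) = \bigcup_{\sqrt{Q_i}\subseteq\fp} V(Q_i)$ and any associated prime of a power contains the radical of that ideal; alternatively one can bypass this by the cleaner route of checking the equality of two monomial ideals exponent-vector by exponent-vector using the combinatorial description of monomial primary decomposition.
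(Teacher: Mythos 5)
The paper does not give its own proof of this lemma; it cites it from \cite[Lemma 3.1]{HHT}, so there is no in-paper argument to compare against. Your proof strategy (compute $I^n R_\fp \cap R$ for each $\fp \in \Ass(R/I)$, show it equals $(Q_{\subseteq\fp})^n$, then drop the non-maximal primes as redundant) is correct in structure, and the pieces you assemble are the right ones: $I R_\fp = Q_{\subseteq\fp} R_\fp$ because localization kills the $Q_i$ with $\sqrt{Q_i}\not\subseteq\fp$ and commutes with finite intersections, hence $I^n R_\fp = (Q_{\subseteq\fp})^n R_\fp$; and for an ideal $J$ all of whose associated primes lie in $\fp$, contraction from $R_\fp$ returns $J$. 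The redundancy argument for passing from $\Ass(R/I)$ to $\max(I)$ and the ``in particular'' clause are also fine.

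There is, however, a genuine gap in the one step you yourself flag as the main obstacle: showing that every associated prime of $(Q_{\subseteq\fp})^n$ is contained in $\fp$. Your proposed justification---``$(Q_{\subseteq\fp})^n$ is supported on $V(Q_{\subseteq\fp})$ and any associated prime of a power contains the radical of that ideal''---proves a containment in the wrong direction. It yields $\mathfrak q \supseteq \sqrt{Q_{\subseteq\fp}}$ for any $\mathfrak q\in\Ass(R/(Q_{\subseteq\fp})^n)$, which does \emph{not} imply $\mathfrak q\subseteq\fp$: for instance in $k[x,y,z,w]$ with $\fp=(x,y,z)$ and $Q_{\subseteq\fp}=(x^2,y)\cap(y,z)$, the prime $(x,y,w)$ contains $\sqrt{Q_{\subseteq\fp}}=(y,xz)$ yet is not contained in $\fp$. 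The earlier parenthetical (``each primary component $Q$ of $J$ satisfies $\sqrt Q\subseteq\fp$'') is circular, since that is exactly what needs proving. The correct and short fix: $(Q_{\subseteq\fp})^n$ is generated by monomials involving only the variables in $\fp$, so any variable $x_j\notin\fp$ is a nonzerodivisor on $R/(Q_{\subseteq\fp})^n$; since associated primes of a monomial ideal are monomial primes, no such $x_j$ can belong to an associated prime, forcing every $\mathfrak q\in\Ass(R/(Q_{\subseteq\fp})^n)$ to satisfy $\mathfrak q\subseteq\fp$. Equivalently, $(Q_{\subseteq\fp})^n$ is extended along the flat inclusion $k[x_j : x_j\in\fp]\hookrightarrow R$, and associated primes of extended ideals along a polynomial extension are extended, hence contained in the extension of the maximal ideal, which is $\fp$. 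With that repair your argument is complete.
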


\begin{ex}\label{ex:maxAssoc}
If $I$ is a monomial ideal in $k[x_1,\ldots,x_r]$ and if $\frak m=(x_1,\ldots,x_r)$ is associated to $I$, then $I^{(n)}=I^n$ for all $n\geq 1$. In particular, if $r=2$ then $I^{(n)}=I^n$ for all $n\geq 1$.
\end{ex}

We now introduce a few tools to help work with monomial ideals.
\begin{defn} \label{def:NP} Let $R = k[x_1, \ldots, x_r] = k[\mathbf{x}]$, and let $I$ be a monomial ideal.  We define the \textit{Newton polyhedron of $I$}, denoted $\NP(I)$, as the convex hull of exponent vectors for monomials in $I$:
\[
    \NP(I) = \operatorname{cvxhull}\{\mathbf b \in \Z_{\ge 0}^n : {\mathbf x}^\mathbf b \in I\},
\]
\end{defn}
where $\mathbf b = (b_1, \ldots, b_r)$ and $\mathbf x^{\mathbf b} = x_1^{b_1}\cdots x_r^{b_r}$.  In this paper, we will make no distinction between a monomial and its corresponding exponent vector in $\mathbb R^r$.

The Newton polyhedron allows the study of monomial ideals from the perspective of Euclidean geometry.  An important property to note is that $\NP(I^n) = n \NP(I)$ for all $n \ge 1$.  This allows $\NP(I)$ to be quite useful when studying powers of monomial ideals.  We can now define another polyhedron to study symbolic powers:

\begin{defn}\label{def:SP}\cite[Definition 3.2]{CDFFHSTY} Let $I$ and $Q_i$ be as in Lemma \ref{primdecomppowers}.  Then the \textit{symbolic polyhedron} of $I$, denoted $\SP(I)$, is
\[
    \SP(I) = \bigcap_{\mathfrak p \in \max(I)} \NP(Q_{\subseteq \mathfrak p}).
\]

In particular, if $I$ has no embedded primes, then $\SP(I) = \NP(Q_1) \cap \cdots \cap \NP(Q_s)$.

\end{defn}
\begin{rem} For Newton and symbolic polyhedra of monomial ideals in $k[x_1, \ldots, x_r]$, we use variables $u_1, u_2, \ldots, u_r$ to describe coordinates in $\mathbb R^r$.  If $r = 3$, we use coordinates $u, v, w$.

\end{rem}

\begin{ex} \label{ex:easyExample} Let $I = (xy, xz, yz) \subseteq k[x, y, z]$.  This is one of the simplest examples of ideals with non-trivial symbolic powers; for example, $I^2 = (x^2y^2, x^2yz, xy^2z, xyz^2, x^2z^2, y^2z^2)$, but $I^{(2)} = (x^2y^2, x^2z^2, y^2z^2, xyz)$. 
 Thus, $\NP(I) \ne \SP(I)$.  In particular,
\[
    \NP(I) = \begin{cases} u + v \ge 1\\
    u + w \ge 1\\
    v + w \ge 1\\
    u + v + w \ge 2\\
        u, v, w \ge 0,
    \end{cases}
\quad\text{ and }\quad 
    \SP(I) = \begin{cases} u + v \ge 1\\
    u + w \ge 1\\
    v + w \ge 1\\
    u, v, w \ge 0.
    \end{cases}
\]
Figures \ref{fig:easyExNP} and \ref{fig:easyExSP} are graphs of these two bodies.  In particular, note that $(1/2, 1/2, 1/2)$ is in $\SP(I)$ but not in $\NP(I)$, which corresponds to $xyz \in I^{(2)}\setminus I^2$.  In Section 5, we study this ideal further.
\end{ex}

\begin{figure}
\begin{minipage}{.5\textwidth}
  \centering
  \includegraphics[width=.6\linewidth]{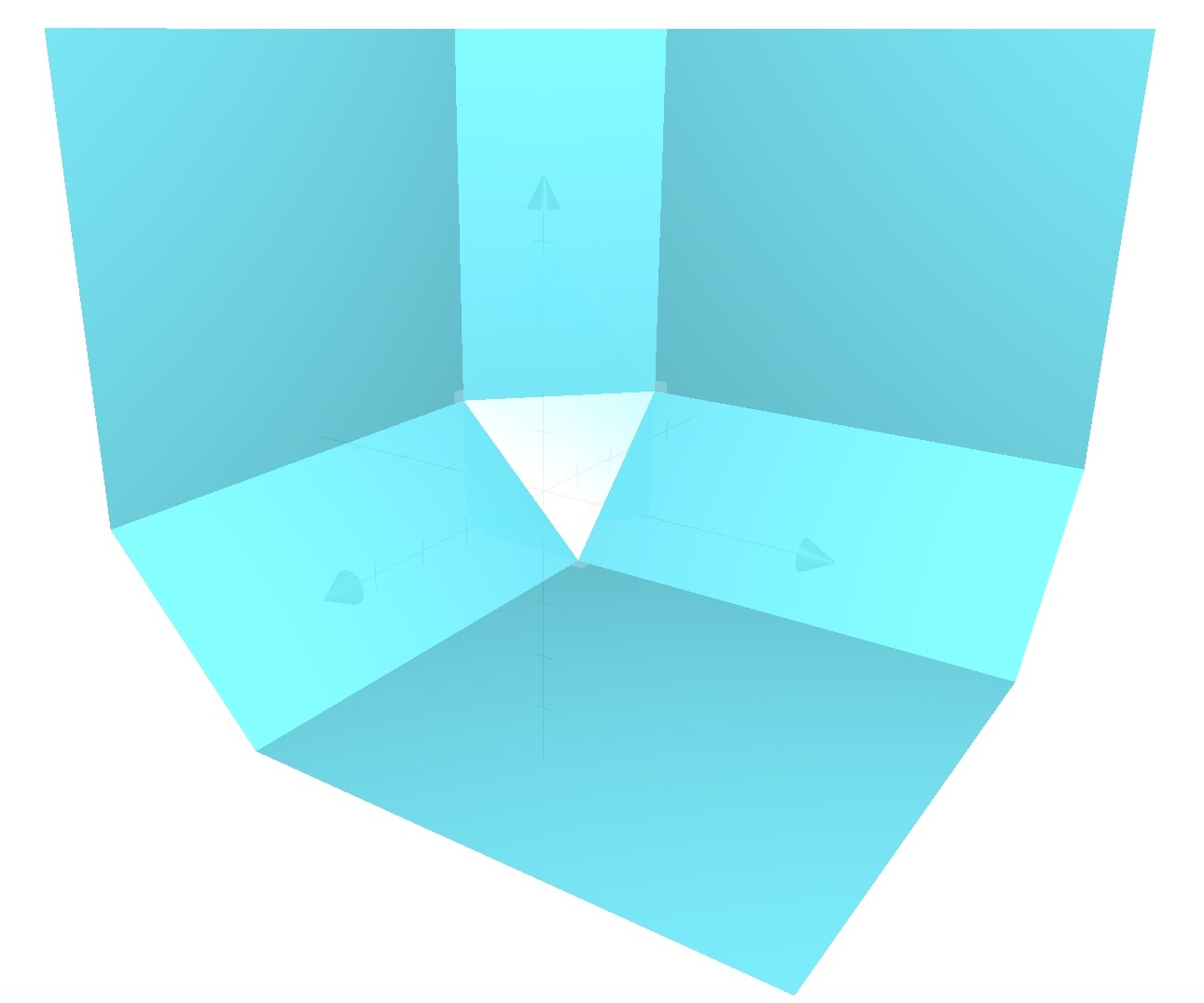}
  \captionof{figure}{${\rm NP}(xy, yz, xz)$}
  \label{fig:easyExNP}
\end{minipage}%
\begin{minipage}{.5\textwidth}
  \centering
  \includegraphics[width=.6\linewidth]{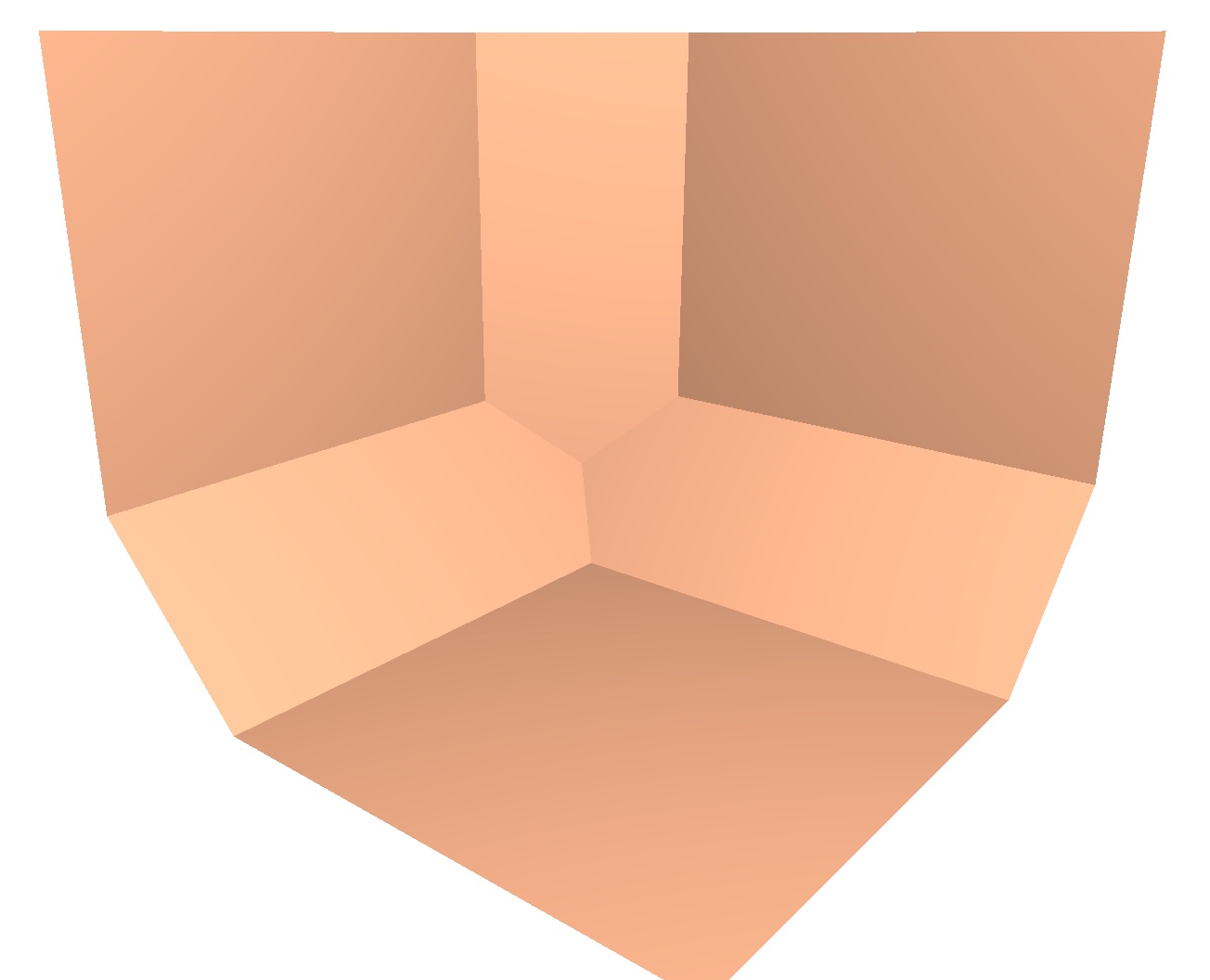}
  \captionof{figure}{$\SP(xy, yz, xz)$}
  \label{fig:easyExSP}
\end{minipage}
\end{figure}

\begin{ex} \label{ex:purePowers}\cite[Lemma 3.7]{CDFFHSTY}.  Let $I$ be an ideal without embedded primes such that $I$ is an intersection of pure powers.  That is, $I = \bigcap\limits_{1 \le i \le s} Q_i$, where $Q_i = (x_1^{a_{i1}}, x_2^{a_{i2}}, \ldots, x_r^{a_{ir}})$, where $a_{ij} \in \Z_{>0} \cup \{-\infty\}$, with the convention that $x_i^{-\infty} = 0$, and $\sqrt{Q_i}$ distinct for each $i$.  Then, $\SP(I)$ consists of the non-negative solutions of:
\[
    \begin{cases} \frac1{a_{11}} u_1 + \frac 1{a_{12}}u_2 + \cdots + \frac 1{a_{1r}} u_r \ge 1,\\
    \vdots\\
    \frac1{a_{s1}}u_1 + \frac 1{a_{s2}}u_2 + \cdots + \frac 1{a_{sr}} u_r \ge 1,

    \end{cases}
\]
where $\frac1{-\infty} = 0$.
    
\end{ex}

We note that containment in either polyhedron \textit{does not} imply containment in the ideal.  That is, if ${\mathbf x}^{\mathbf b} \in I^n$, then $\mathbf b \in n\NP(I) \cap \Z^r_{\ge 0}$, but the converse may not be true.  Similarly, if ${\mathbf x}^{\mathbf b} \in I^{(n)}$, then $\mathbf b \in n\SP(I) \cap \Z^r_{\ge 0}$, without the converse necessarily holding.  However, we can extend $I$ so that the converse always holds:

\begin{defn}\label{def:intCl} Let $I$ be an ideal in a ring $R$. We say an element $r \in R$ is \textit{integral} over $I$ if $r$ satisfies a monic, polynomial equation of the form,
\[
    r^n + a_1r^{n-1} + \cdots + a_n = 0
\]
where $a_i \in I^i$.  The \textit{integral closure} of $I$, denoted $\overline{I}$, is the set of all elements in $R$ that are integral over $I$.  If $I = \overline{I}$, we say $I$ is \textit{integrally closed}. 
    
\end{defn}

The integral closure of an ideal is, itself, an ideal.  Furthermore, if $I$ is a monomial ideal, then $\overline{I}$ is also a monomial ideal \cite{HS}.  In fact, we can do even better:

\begin{prop}\label{prop:intClForm} Let $I \subseteq k[x_1, \ldots, x_r] = k[\mathbf{x}]$ be a monomial ideal.  Then,
\[
    \overline{I} = ( \mathbf{x}^{\mathbf b} : \mathbf{x}^{a \mathbf b} \in I^a \text{ for some } a \ge 1).
\]
    
\end{prop}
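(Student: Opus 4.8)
The plan is to prove the two inclusions separately. The inclusion ``$\supseteq$'' is essentially formal: if $\mathbf{x}^{\mathbf b}$ is a monomial with $\mathbf{x}^{a\mathbf b}\in I^a$ for some $a\ge 1$, then $r=\mathbf{x}^{\mathbf b}$ satisfies the monic equation $r^a-\mathbf{x}^{a\mathbf b}=0$, in which every coefficient strictly below the leading term is $0$ (hence trivially in the appropriate power of $I$) and the constant term $-\mathbf{x}^{a\mathbf b}$ lies in $I^a$. By Definition \ref{def:intCl} this shows $\mathbf{x}^{\mathbf b}\in\overline I$, and since $\overline I$ is an ideal, the ideal generated by all such monomials is contained in $\overline I$.

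For ``$\subseteq$'' I would first invoke the cited fact \cite{HS} that $\overline I$ is again a monomial ideal, so it suffices to show every monomial $\mathbf{x}^{\mathbf b}\in\overline I$ lies in the right-hand side. Fix such a monomial. By Definition \ref{def:intCl} there is an $n\ge 1$ and elements $c_i\in I^i$ with $\mathbf{x}^{n\mathbf b}+c_1\mathbf{x}^{(n-1)\mathbf b}+\cdots+c_n=0$, i.e.\ $\mathbf{x}^{n\mathbf b}=-\sum_{i=1}^{n}c_i\,\mathbf{x}^{(n-i)\mathbf b}$. Expanding each $c_i$ as a $k$-linear combination of monomials and using that $I^i$ is a monomial ideal, every monomial in the support of $c_i$ already lies in $I^i$. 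Comparing the coefficient of $\mathbf{x}^{n\mathbf b}$ on the two sides, the left side contributes $1$, so the set of terms $\mathbf{x}^{\mathbf v+(n-i)\mathbf b}$ on the right (with $\mathbf{x}^{\mathbf v}$ in the support of some $c_i$, $1\le i\le n$) that equal $\mathbf{x}^{n\mathbf b}$ is nonempty. For such a term $\mathbf v=i\mathbf b$ with $i\ge 1$, and $\mathbf{x}^{i\mathbf b}\in I^i$; taking $a=i$ finishes this direction.

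The only real content lies in ``$\subseteq$'': one must (i) reduce to monomial $\mathbf{x}^{\mathbf b}$, which is exactly where the theorem of \cite{HS} that $\overline I$ is a monomial ideal enters, and (ii) extract a genuine membership $\mathbf{x}^{i\mathbf b}\in I^i$ from the integral equation. The point requiring a moment's care is that even if several right-hand monomials reduce to $\mathbf{x}^{n\mathbf b}$ and their scalar coefficients cancel in the sum, the index set of such monomials is still nonempty (otherwise the coefficient of $\mathbf{x}^{n\mathbf b}$ on the right would be $0\ne 1$), so at least one honest exponent relation $\mathbf v=i\mathbf b$ survives. An alternative route goes through the Newton polyhedron: $\overline I$ is the monomial ideal with exponent set $\NP(I)\cap\Z^r_{\ge 0}$, and a lattice point $\mathbf b\in\NP(I)$ can be written $\mathbf b=\sum_i\lambda_i\mathbf g_i+\mathbf e$ with the $\mathbf g_i$ the generating exponent vectors of $I$, $\lambda_i\in\Q_{\ge 0}$, $\sum_i\lambda_i=1$, and $\mathbf e\in\Q^r_{\ge 0}$ (rationality coming from that of the polyhedron); clearing denominators with a common $a$ gives $a\mathbf b=\sum_i(a\lambda_i)\mathbf g_i+a\mathbf e$ with $\sum_i a\lambda_i=a$, whence $\mathbf{x}^{a\mathbf b}=\prod_i(\mathbf{x}^{\mathbf g_i})^{a\lambda_i}\cdot\mathbf{x}^{a\mathbf e}\in I^a$. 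I would present the first route, as it is shorter and avoids the rationality bookkeeping.
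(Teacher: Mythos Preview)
Your proof is correct. The paper does not actually prove this proposition; it is stated as a background fact without proof (it is a classical result, found for instance in \cite{HS}). Both inclusions in your argument are fine: ``$\supseteq$'' follows exactly from the trivial integral equation $r^a-\mathbf{x}^{a\mathbf b}=0$, and for ``$\subseteq$'' your coefficient-comparison argument correctly extracts from the integral dependence relation an index $i\ge 1$ with $\mathbf{x}^{i\mathbf b}\in I^i$. The alternative Newton-polyhedron route you sketch is also standard and equally valid.
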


A big advantage of working with integral closure is that $\NP(\overline I) = \NP(I)$, and ${\mathbf x}^{\mathbf b} \in \overline{I^n}$ if and only if $\mathbf b \in n \NP(I)$.  We will see soon in Theorem \ref{thm:SPIcont} that a similar result holds for $\SP(I)$.

There is one more way to define $\SP(I)$, and that is via a limiting body:

\begin{thm}\label{limbodySPI}\cite[Corollary 3.16]{CDFFHSTY} For a monomial ideal $I$,
\[
    \SP(I) = \bigcup_{n \to \infty} \frac 1n \NP(I^{(n)}).
\]
\end{thm}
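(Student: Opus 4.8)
The plan is to establish the two inclusions separately, working throughout with the presentations $\SP(I)=\bigcap_{\mathfrak p\in\max(I)}\NP(Q_{\subseteq\mathfrak p})$ from Definition~\ref{def:SP} and $I^{(n)}=\bigcap_{\mathfrak p\in\max(I)}(Q_{\subseteq\mathfrak p})^{n}$ from Lemma~\ref{primdecomppowers}, together with $\NP(J^{n})=n\,\NP(J)$ and the monomial description of integral closure in Proposition~\ref{prop:intClForm}. The inclusion $\bigcup_{n}\tfrac1n\NP(I^{(n)})\subseteq\SP(I)$ is the easy direction: since $I^{(n)}\subseteq(Q_{\subseteq\mathfrak p})^{n}$ for each $\mathfrak p\in\max(I)$ and $\NP(-)$ is inclusion-preserving, $\NP(I^{(n)})\subseteq\bigcap_{\mathfrak p}\NP\big((Q_{\subseteq\mathfrak p})^{n}\big)=\bigcap_{\mathfrak p}n\,\NP(Q_{\subseteq\mathfrak p})=n\,\SP(I)$; dividing by $n$ and taking the union keeps everything inside the closed convex set $\SP(I)$. (One also checks, from $(I^{(n)})^{m}\subseteq I^{(nm)}$, that $\tfrac1n\NP(I^{(n)})\subseteq\tfrac1{nm}\NP(I^{(nm)})$, so the union is directed and may be read as an increasing limit along multiples.)

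For the reverse inclusion I would first treat rational points and then pass to the closure. Given $x\in\SP(I)\cap\Q^{r}$, clear a common denominator to get $n\ge1$ with $nx\in\Z_{\ge0}^{r}$. For each $\mathfrak p\in\max(I)$ we have $x\in\NP(Q_{\subseteq\mathfrak p})$, so $nx$ is a lattice point of $n\,\NP(Q_{\subseteq\mathfrak p})=\NP\big((Q_{\subseteq\mathfrak p})^{n}\big)$; by the fact recorded after Proposition~\ref{prop:intClForm} this means $\mathbf x^{nx}\in\overline{(Q_{\subseteq\mathfrak p})^{n}}$, and then Proposition~\ref{prop:intClForm}, applied to the monomial ideal $(Q_{\subseteq\mathfrak p})^{n}$, produces an integer $a_{\mathfrak p}\ge1$ with $\mathbf x^{\,a_{\mathfrak p}nx}\in\big((Q_{\subseteq\mathfrak p})^{n}\big)^{a_{\mathfrak p}}=(Q_{\subseteq\mathfrak p})^{na_{\mathfrak p}}$. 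Since $\max(I)$ is finite, let $m$ be a common multiple of the integers $na_{\mathfrak p}$; raising the $\mathfrak p$-th containment to the power $m/(na_{\mathfrak p})$ gives $\mathbf x^{mx}\in(Q_{\subseteq\mathfrak p})^{m}$ for every $\mathfrak p$, hence $\mathbf x^{mx}\in\bigcap_{\mathfrak p}(Q_{\subseteq\mathfrak p})^{m}=I^{(m)}$ by Lemma~\ref{primdecomppowers}. Therefore $mx\in\NP(I^{(m)})$, i.e.\ $x\in\tfrac1m\NP(I^{(m)})\subseteq\bigcup_{n}\tfrac1n\NP(I^{(n)})$.

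To finish, note that $\SP(I)$ is a rational polyhedron (an intersection of such), so $\SP(I)\cap\Q^{r}$ is dense in it; since $\SP(I)$ is closed, combining the previous paragraph with the easy inclusion gives $\overline{\bigcup_{n}\tfrac1n\NP(I^{(n)})}=\SP(I)$, which is the assertion once the union is understood as the limiting body. (If one wants the union itself to equal $\SP(I)$, one can invoke the Noetherianity of $\mathcal R_{s}(I)$ for monomial ideals to obtain $N$ with $I^{(Nk)}=(I^{(N)})^{k}$ for all $k$, enlarge $m$ above to a multiple of $N$, and conclude $\bigcup_{n}\tfrac1n\NP(I^{(n)})=\tfrac1N\NP(I^{(N)})=\SP(I)$ exactly.) The hard part --- the step I expect to be the main obstacle --- is the reverse inclusion, and inside it the passage from ``$nx$ is a lattice point of $n\,\NP(Q_{\subseteq\mathfrak p})$'' to ``a power of $\mathbf x^{nx}$ genuinely lies in a power of $Q_{\subseteq\mathfrak p}$'': the Newton polyhedron only sees the integral closure, so recovering honest ideal membership forces the auxiliary exponents $a_{\mathfrak p}$, which must then be synchronized over the finitely many maximal associated primes by passing to a common multiple.
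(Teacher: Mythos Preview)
The paper does not supply its own proof of this theorem; it is quoted as \cite[Corollary 3.16]{CDFFHSTY} and used as a black box (notably in the proof of Theorem~\ref{thm:SPIcont}). So there is nothing in the paper to compare your argument against.

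That said, your proposal is correct. The easy inclusion is exactly as you say, via $I^{(n)}\subseteq (Q_{\subseteq\mathfrak p})^n$ and $\NP(J^n)=n\,\NP(J)$. For the reverse inclusion your rational-point argument is sound: the key step---passing from $nx\in n\,\NP(Q_{\subseteq\mathfrak p})$ to an honest power membership via Proposition~\ref{prop:intClForm}, then synchronizing the exponents $a_{\mathfrak p}$ over the finitely many $\mathfrak p\in\max(I)$ by taking a common multiple---works precisely because $\max(I)$ is finite. Your handling of the closure issue is also appropriate: the Noetherianity of $\mathcal R_s(I)$ for monomial ideals (cited in the paper from \cite{HHT}) does yield an $N$ with $I^{(Nk)}=(I^{(N)})^k$, and then your argument gives $\SP(I)=\tfrac1N\NP(I^{(N)})$ exactly, so the union (which contains this term and is bounded above by $\SP(I)$) equals $\SP(I)$ on the nose. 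For the purposes of this paper the rational case already suffices, since Theorem~\ref{thm:SPIcont} only applies the equality at lattice points.
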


For further reading on limiting bodies and their relation with analytic spread, see \cite{HN}.

\section{Growth of Symbolic Defect of Monomial Ideals}

This brief section will determine an upper-bound for all monomial ideals without embedded primes.  In a paper by Dao and Montaño \cite{DM}, they prove the following result:

\begin{thm}\label{thm:slbound} \cite[Theorem 4.1]{DM} Let $I$ be a monomial ideal without embedded primes in $k[x_1, \ldots, x_r]$.  Then,
\[
    \sl(I) \le r - \left\lfloor \frac{r - 1}{\operatorname{bigHeight} I}\right\rfloor,
\]
where $\operatorname{bigHeight} I$ denotes the maximal height of an associated prime of $I$.

\end{thm}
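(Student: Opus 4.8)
The plan is to move the whole question onto the symbolic polyhedron and then reduce it to an elementary fact about the normals of Newton polyhedra. First I would record the polyhedral description of $\sl(I)$. Since $I$ is monomial, $\mathcal R_s(I)$ is Noetherian, so a sufficiently divisible Veronese $\mathcal R_s(I)^{(N)}$ is standard graded, i.e. equals $\mathcal R(I^{(N)})$, whence $\sl(I)=\ell(I^{(N)})$; enlarging $N$ we may moreover arrange $\tfrac1N\NP(I^{(N)})=\SP(I)$ (this follows from Theorem~\ref{limbodySPI} together with the fact that $\SP(I)$ is a rational polyhedron, so the increasing union in that theorem stabilizes). Combining this with the known polyhedral formula for the analytic spread of a monomial ideal $J$, namely $\ell(J)=1+\max\{\dim F: F\text{ a bounded face of }\NP(J)\}$, one gets
\[
\sl(I)\;=\;1+\max\{\dim G: G\text{ a bounded face of }\SP(I)\}.
\]
By Lemma~\ref{primdecomppowers} (no embedded primes) we have $\SP(I)=\bigcap_{i=1}^s\NP(Q_i)$; writing $\sqrt{Q_i}=(x_j:j\in S_i)$ we may take each $Q_i$ to involve only the variables indexed by $S_i$, so $\NP(Q_i)$ is cut out by the inequalities $u_j\ge 0$ and certain ``slant'' inequalities $\langle\mathbf a,\mathbf u\rangle\ge b$ with $\mathbf a\ge 0$ and $\operatorname{supp}(\mathbf a)\subseteq S_i$; note $|S_i|=\operatorname{ht}\sqrt{Q_i}\le h$ where $h=\operatorname{bigHeight} I$.

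It then remains to show every bounded face $G$ of $P:=\bigcap_i\NP(Q_i)$ has $\dim G\le r-\lceil r/h\rceil$, since $r-\lceil r/h\rceil=r-1-\lfloor(r-1)/h\rfloor$. Let $\mathbf c$ expose $G$. Because the recession cone of $P$ is $\mathbb R^r_{\ge 0}$, boundedness of $G$ forces $\mathbf c>0$ (a zero entry $c_j$ would put $e_j$ into the recession cone of $G$). Let $Z=\{j:u_j\equiv 0\text{ on }G\}$. Since $P$ is full-dimensional, $\dim G=r-\operatorname{rank}(\mathcal N)$, where $\mathcal N$ is the set of normals of the defining inequalities of $P$ that hold with equality on all of $G$; these are of two kinds, the $e_j$ with $j\in Z$ and certain slant normals $\mathbf a\ge 0$ with $\operatorname{supp}(\mathbf a)$ inside some $S_i$. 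Moreover $\mathbf c\in\operatorname{cone}(\mathcal N)$, so write $\mathbf c=\sum_{j\in Z}\nu_j e_j+\sum_{\mathbf a}\mu_{\mathbf a}\mathbf a$ with all coefficients $\ge 0$. For $j\notin Z$ only slant terms contribute, so $c_j=\sum_{\mathbf a}\mu_{\mathbf a}(\mathbf a)_j>0$ forces some $\mathbf a$ with $\mu_{\mathbf a}>0$ and $(\mathbf a)_j>0$; hence the supports of the slant normals occurring with positive coefficient cover $[r]\setminus Z$.

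The remaining input is purely linear-algebraic: \emph{if nonnegative vectors whose supports have size $\le h$ cover a set $Y$ of size $m$, then they span a subspace of dimension $\ge\lceil m/h\rceil$.} Indeed, pick from among them a minimal subcollection $\mathbf a_1,\dots,\mathbf a_p$ still covering $Y$; then $p\ge\lceil m/h\rceil$, and $\mathbf a_1,\dots,\mathbf a_p$ are linearly independent, for if one were a combination of the others its support would be absorbed by the union of the others' supports, so those others would still cover $Y$ — contradicting minimality. Applying this to the images in $\mathbb R^{[r]\setminus Z}$ of the slant normals with positive coefficient (whose supports cover $[r]\setminus Z$), these images span a space of dimension $\ge\lceil(r-|Z|)/h\rceil$; since the $e_j$ ($j\in Z$) are independent and span $\mathbb R^Z$, we get $\operatorname{rank}(\mathcal N)\ge |Z|+\lceil(r-|Z|)/h\rceil\ge\lceil r/h\rceil$ (the last step because $t\mapsto t-\lceil t/h\rceil$ is nondecreasing). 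Hence $\dim G=r-\operatorname{rank}(\mathcal N)\le r-\lceil r/h\rceil$, and therefore $\sl(I)\le r-\lfloor(r-1)/h\rfloor$.

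The step I expect to be the main obstacle, besides routine bookkeeping in the face estimate (tracking exactly which inequalities of $P$ are active on $G$ and that the slant normals really have supports inside the $S_i$), is making the first paragraph fully rigorous: correctly assembling Noetherianity of $\mathcal R_s(I)$, the limiting-body description of $\SP(I)$, and the polyhedral formula for $\ell$ of a monomial ideal into the clean identity $\sl(I)=1+\max\dim(\text{bounded face of }\SP(I))$. Once that identity is in hand, the face estimate above is essentially self-contained.
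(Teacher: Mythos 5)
The paper does not prove this result at all: Theorem~\ref{thm:slbound} is stated as a straight citation of Dao--Monta\~no \cite[Theorem~4.1]{DM}, so there is no ``paper's own proof'' to compare against. What you have given is a from-scratch argument, and as far as I can tell it is correct. The two polyhedral inputs you invoke --- the formula $\ell(J)=1+\max\{\dim F: F\text{ bounded face of }\NP(J)\}$ for a monomial ideal $J$, and the stabilization $\SP(I)=\tfrac1N\NP(I^{(N)})$ for $N$ divisible enough coming from Noetherianity of $\R_s(I)$ plus Theorem~\ref{limbodySPI} --- are both genuine theorems (the first is due to Bivi\`a-Ausina, \emph{The analytic spread of monomial ideals}, and the second is standard once a Veronese of $\R_s(I)$ is standard graded). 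With those in hand, the normal-cone argument is sound: the recession cone of $\SP(I)$ is the full orthant, so a bounded face $G$ must be exposed by a strictly positive $\mathbf c$; LP duality puts $\mathbf c$ in the cone of the tight normals $\mathcal N$, and the supports of the slant normals appearing with positive coefficients cover $[r]\setminus Z$; your minimal-subcover linear-independence lemma then gives $\operatorname{rank}\mathcal N\ge |Z|+\lceil(r-|Z|)/h\rceil\ge\lceil r/h\rceil$, hence $\dim G\le r-\lceil r/h\rceil=r-1-\lfloor(r-1)/h\rfloor$ and $\sl(I)\le r-\lfloor(r-1)/h\rfloor$. In spirit this is the same polyhedral strategy used by Dao--Monta\~no, so you have not so much found a new route as reconstructed theirs; the one place worth being explicit when writing it up is the attribution of the analytic-spread-equals-one-plus-max-compact-face-dimension formula, since that is the load-bearing external input on which the whole translation to convex geometry rests.
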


Note that in \cite{DM}, they employ the alternate definition of symbolic powers, where the intersection described in Definition \ref{def:sympower} is taken over the minimal primes of $I$.  In the case without embedded primes, our definition and their definition coincide. 

We are now ready to prove Theorem \ref{thm:bigOsymdef}:

\begin{proof}[Proof of Theorem \ref{thm:bigOsymdef}] If $\mathfrak m$ is an associated prime, then by Example \ref{ex:maxAssoc}, $\sdef_I(n) = 0$.  Otherwise, the bound in Theorem \ref{thm:slbound} is no more than $r - 1$.  Since $\sdef_I(n) \le \mu(I^{(n)})$, the conclusion follows.
\end{proof}

In the next section we prove a similar result for classes of ideals that have embedded primes.

\section{Integral Closure and Symbolic Defect}

By the nature of Newton polyhedra, a monomial's exponent vector is within the Newton polyhedron of a monomial ideal if and only if the monomial is in the integral closure of the ideal.  With this in mind, it is natural to expand the study of symbolic polyhedra and consider the interactions with integral closure.

\begin{defn}\label{isdef} Let $I$ be an ideal of a commutative ring $R$.  We define the \textit{integral symbolic defect of $I$} as
\[
    \operatorname{isdef}_I(n) := \mu\left(\factor{\overline{I^{(n)}}}{\overline{I^n}}\right) = \dim_k\left(\frac{\overline{I^{(n)}}}{\overline{I^n} +\mathfrak{m}\overline{I^{(n)}}} \right)
\]
\end{defn}

The following result will be essential in calculating $\operatorname{isdef}_I(n)$:

\begin{thm}\label{thm:SPIcont} Let $I$ be a monomial ideal in $k[x_1, \ldots, x_r] = k[\mathbf x]$.  Then ${\mathbf x}^{\mathbf u} \in \overline{I^{(m)}}$ if and only if $\mathbf u \in (m \SP(I)) \cap \Z^r_{\ge 0}$.
    
\end{thm}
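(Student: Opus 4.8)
The plan is to prove the two containments separately, using the description of $\SP(I)$ as a limiting body from Theorem \ref{limbodySPI} together with Proposition \ref{prop:intClForm}.

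\textbf{Easy direction.} First I would show that $\mathbf x^{\mathbf u} \in \overline{I^{(m)}}$ implies $\mathbf u \in (m\SP(I)) \cap \Z^r_{\ge 0}$. By Proposition \ref{prop:intClForm}, $\mathbf x^{\mathbf u} \in \overline{I^{(m)}}$ means $\mathbf x^{a\mathbf u} \in (I^{(m)})^a$ for some $a \ge 1$. Since $(I^{(m)})^a \subseteq I^{(ma)}$ (the symbolic Rees algebra is a subalgebra of $\bigoplus I^{(n)}t^n$; this inclusion holds for any ideal and follows from the definition of symbolic powers by localizing), we get $\mathbf x^{a\mathbf u} \in I^{(ma)}$, hence $a\mathbf u \in (ma)\SP(I)$ because exponent vectors of monomials in $I^{(ma)}$ lie in $\NP(I^{(ma)}) \subseteq (ma)\SP(I)$ (the latter inclusion is the $n = ma$ case of Theorem \ref{limbodySPI}, or more directly Lemma \ref{primdecomppowers} plus $\NP(J^n) = n\NP(J)$). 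Dividing by $a$ and using convexity/scaling of $\SP(I)$ gives $\mathbf u \in m\SP(I)$, and $\mathbf u \in \Z^r_{\ge 0}$ since $\mathbf x^{\mathbf u}$ is a genuine monomial.

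\textbf{Hard direction.} Conversely, suppose $\mathbf u \in (m\SP(I)) \cap \Z^r_{\ge 0}$; I want $\mathbf x^{\mathbf u} \in \overline{I^{(m)}}$, i.e. $\mathbf x^{a\mathbf u} \in (I^{(m)})^a$ for some $a$. The idea is to clear denominators. By Theorem \ref{limbodySPI}, $\SP(I) = \bigcup_n \frac1n \NP(I^{(n)})$, and since $\mathbf u/m \in \SP(I)$, there is some $N$ with $\mathbf u/m \in \frac1N \NP(I^{(N)})$, i.e. $N\mathbf u/m \in \NP(I^{(N)})$. I then want to relate $\NP(I^{(N)})$ back to something involving $I^{(m)}$. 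The cleanest route is to use that $\SP(I)$ is a rational polyhedron (it is a finite intersection of the rational polyhedra $\NP(Q_{\subseteq\mathfrak p})$ by Definition \ref{def:SP}), so its vertices and the defining inequalities are rational; hence scaling by a suitable $a$ makes $a\mathbf u/m \cdot m = a\mathbf u$ an \emph{integral} point of $(am)\SP(I)$ that moreover lies in $\NP(I^{(am)})$ — but I still need membership in the \emph{ideal} $(I^{(m)})^a$, not just the polyhedron. This is where the real work is: I expect to need that for $a$ sufficiently divisible, $(am)\SP(I) \cap \Z^r_{\ge 0}$ consists exactly of exponent vectors of monomials in $(I^{(m)})^a$. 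One approach: show $\overline{(I^{(m)})^a} \supseteq \overline{I^{(am)}}$ is false in general, so instead argue directly that any integral point $\mathbf v$ of $a \cdot \NP(I^{(m)}) = \NP((I^{(m)})^a)$ gives $\mathbf x^{\mathbf v} \in \overline{(I^{(m)})^a}$ — this is just the defining property of Newton polyhedra and integral closure from the paragraph after Proposition \ref{prop:intClForm} — and then check that $a\mathbf u \in a\NP(I^{(m)})$, equivalently $\mathbf u \in \NP(I^{(m)})$.

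\textbf{Reduction of the hard direction to a polyhedral identity.} So the crux becomes: $\mathbf u \in m\SP(I)$ with $\mathbf u$ integral implies $\mathbf u \in \NP(I^{(m)})$, i.e. $m\SP(I) \cap \Z^r \subseteq \NP(I^{(m)})$. Note $\NP(I^{(m)}) \subseteq m\SP(I)$ always (the easy direction above gave this). For the reverse on lattice points, I would use Lemma \ref{primdecomppowers}: $I^{(m)} = \bigcap_{\mathfrak p} (Q_{\subseteq\mathfrak p})^m$, so $\NP(I^{(m)})$ contains $\mathbf u$ iff $\mathbf x^{\mathbf u} \in \overline{(Q_{\subseteq\mathfrak p})^m}$ for each $\mathfrak p$ — wait, that needs $\NP$ of an intersection to be the intersection of $\NP$'s, which \emph{is} true for integral closures of monomial ideals since $\overline{J_1 \cap J_2}$ and the $\NP$ is the intersection of half-spaces. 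Concretely $\NP(I^{(m)}) = \bigcap_{\mathfrak p} \NP((Q_{\subseteq\mathfrak p})^m) = \bigcap_{\mathfrak p} m\,\NP(Q_{\subseteq\mathfrak p}) = m\bigcap_{\mathfrak p}\NP(Q_{\subseteq\mathfrak p}) = m\SP(I)$. That would show $\NP(I^{(m)}) = m\SP(I)$ on the nose (as subsets of $\R^r$), which immediately gives both directions: $\mathbf x^{\mathbf u} \in \overline{I^{(m)}} \iff \mathbf u \in \NP(I^{(m)}) \cap \Z^r_{\ge 0} = (m\SP(I)) \cap \Z^r_{\ge 0}$, the first equivalence being exactly the defining property of integral closure of monomial ideals recalled in the excerpt. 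The main obstacle is justifying $\NP(J_1 \cap \cdots \cap J_s) = \NP(J_1) \cap \cdots \cap \NP(J_s)$ for monomial ideals $J_i$: the inclusion $\subseteq$ is clear, and $\supseteq$ needs the fact that a lattice point in every $\NP(J_i)$ lies in $\overline{J_i}$ for all $i$, hence in $\bigcap \overline{J_i} = \overline{\bigcap J_i}$ (monomial, and $\NP$ only sees the integral closure) — so really the whole proof reduces to this clean polyhedral lemma plus Definition \ref{def:SP}. If that identity needs more care (e.g. $\overline{J_1 \cap J_2} = \overline{J_1} \cap \overline{J_2}$ for monomial ideals, which does hold), I would isolate it as a preliminary lemma before the main argument.
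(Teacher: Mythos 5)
Your $(\Rightarrow)$ (``easy'') direction is correct and is essentially the same as the paper's. The difficulty, and where your proposal goes wrong, is in the $(\Leftarrow)$ direction.

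You reduce to proving the polyhedral identity $\NP(I^{(m)}) = m\,\SP(I)$, via the claimed equality $\NP(J_1 \cap \cdots \cap J_s) = \NP(J_1) \cap \cdots \cap \NP(J_s)$. That identity is false (you yourself flag that $\supseteq$ ``needs care''), and worse, the conclusion $\NP(I^{(m)}) = m\,\SP(I)$ is \emph{already} contradicted by the paper's Example \ref{ex:easyExample}: there $\NP(I) \neq \SP(I)$ for $I = (xy,xz,yz)$, yet $I^{(1)} = I$, so $\NP(I^{(1)}) \neq 1 \cdot \SP(I)$. Only the lattice-point statement can be true, so the proof must genuinely use the integrality of $\mathbf u$; an argument that establishes an equality of real polyhedra cannot be correct.

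You then retreat to the lattice-point version and assert that $\overline{J_1 \cap J_2} = \overline{J_1} \cap \overline{J_2}$ ``does hold'' for monomial ideals. It does not. Take $J_1 = (x^2, y^3)$ and $J_2 = (x^3, y^2)$ in $k[x,y]$. Then $\NP(J_1)$ is $\{3u + 2v \ge 6\}$ and $\NP(J_2)$ is $\{2u + 3v \ge 6\}$, and $(2,1)$ satisfies both, so $x^2 y \in \overline{J_1} \cap \overline{J_2}$. But $J_1 \cap J_2 = (x^3, x^2y^2, y^3)$, whose Newton polyhedron is cut out by $2u + v \ge 6$ and $u + 2v \ge 6$; the point $(2,1)$ violates the first, so $x^2 y \notin \overline{J_1 \cap J_2}$. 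Thus the inclusion $\bigcap \overline{J_i} \subseteq \overline{\bigcap J_i}$, which is the entire content of the hard direction in your reformulation, fails for general monomial ideals. (The containment $\overline{\bigcap J_i} \subseteq \bigcap \overline{J_i}$ is the easy one, since an intersection of integrally closed ideals is integrally closed.)

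This does not by itself refute the theorem, because the ideals $J_i = (Q_{\subseteq \mathfrak p})^m$ appearing in Lemma \ref{primdecomppowers} and Definition \ref{def:SP} have pairwise distinct radicals, and my counterexample uses two ideals with the same radical. But that means any correct proof along your lines must \emph{use} the distinct-radical hypothesis in an essential way, and your write-up gives no indication of how. As it stands, the gap is not a matter of ``isolating a preliminary lemma'': the lemma you would isolate is false, and the true statement you need has not been identified, let alone proved.

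For comparison, the paper's proof of $(\Leftarrow)$ takes a genuinely different route: it does not touch the primary decomposition at all, but instead uses the limiting-body description $\SP(I) = \bigcup_n \frac{1}{n}\NP(I^{(n)})$ from Theorem \ref{limbodySPI} together with the Veronese-type observation $\frac{m}{n}\NP(I^{(n)}) = \frac{m}{nt}\NP((I^{(n)})^t) \subseteq \frac{m}{nt}\NP(I^{(nt)})$ to reduce to a comparison of $\frac{m}{n}\NP(I^{(n)})$ with $\NP(I^{(m)})$. Whether or not you find every step of that chain of inclusions convincing, the strategy is to compare $\NP(I^{(n)})$ for different $n$ directly rather than to decompose $I^{(m)}$ and intersect Newton polyhedra, and it sidesteps entirely the false polyhedral identity your proof relies on.
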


\begin{proof} $(\Leftarrow)$ Assume $\mathbf u \in (m \SP(I)) \cap \Z^r_{\ge 0}$.  Then by Theorem \ref{limbodySPI}, 
\[
\mathbf u \in \left( \bigcup_{n \to \infty} \frac mn \NP(I^{(n)}) \right) \cap \Z^r_{\ge 0} = \bigcup_{n \to \infty} \frac mn \NP(I^{(n)}) \cap \Z^r_{\ge 0},
\] so $\mathbf u \in \frac mn \NP(I^{(n)}) \cap \Z^r_{\ge 0}$ for some $n$.  Note that, for any $t \ge 1$, 
$$\frac mn \NP(I^{(n)}) = \frac m{nt}\NP((I^{(n)})^t) \subseteq \frac m{nt} \NP(I^{(nt)}).$$  Thus, we may assume that $m \le n$.  It follows that 
$$\frac mn \NP(I^{(n)}) \subseteq \frac mn \NP(I^{(m)}) \subseteq \NP(I^{(m)}).$$  So $\mathbf u \in \NP(I^{(m)}) \cap \Z^r_{\ge 0}$, and we are done.




$(\Rightarrow)$ If  ${\mathbf x}^{\mathbf u} \in \overline{I^{(m)}}$, then  
$$\mathbf u \in \NP(I^{(m)})\cap \Z^r_{\ge 0} = m \left(\frac 1m \NP(I^{(m)})\cap \Z^r_{\ge 0}\right) \subseteq
m \left( \bigcup\limits_{n \to \infty} \frac 1n \NP(I^{(n)}) \cap \Z^r_{\ge 0}\right).$$
\end{proof}

\begin{rem} Geometrically, integral symbolic defect corresponds to the number of points in $n\operatorname{SP}(I)\setminus n\NP(I)$ that are minimal.  That is, the number of points $\mathbf u \in \mathbb{R}^r$ such that $\mathbf u \in n\SP(I) \setminus n\NP(I)$ but $\mathbf u - \mathbf e_i \not\in n\SP(I) \setminus n\NP(I)$, for every standard unit vector, $\mathbf e_i$.  This includes any points that are not in $I^{(n)}$ but are in $n\operatorname{SP}(I)$.
\end{rem}

Finding $\isdef_I(n)$ amounts entirely to a geometric problem.  Precisely, we desire to count the generators in the affine semigroup corresponding to the symbolic polyhedron that are not contained in the Newton polyhedron.  As such, it is easier to calculate $\isdef_I(n)$ over $\sdef_I(n)$.  Directly comparing $\sdef_I(n)$ and $\isdef_I(n)$, however, still proves to be challenging.  This is a bit surprising, considering the result from Hà-Nguy$\tilde{\text{\^e}}$n \cite[Corollary 4.4]{HN}, implicating $\mu(I^n) = \Theta(\mu(\overline{I^n}))$ and $\mu(I^{(n)}) = \Theta(\mu(\overline{I^{(n)}}))$.

\begin{ex} If $I$ is a monomial ideal, often one will find that $\mu(I) \le \mu(\overline I)$, but this is not always the case.  An counterexample due to Michael DiPasquale is the ideal $I = (x^3,x^2yz^2,xy^2z^2,x^2y^2z,y^3)$, whose integral closure is $\overline{I} = (x^3,x^2y,xy^2,y^3)$.  This is also an example of a monomial ideal whose integral closure is primary, but the ideal itself is not, although the converse is known to be true.
\end{ex}

Despite Hà-Nguy$\tilde{\text{\^e}}$n's aforementioned result, it is unknown whether $\sdef_I$ and $\isdef_I$ grow similarly.  Certainly, if $I$ is normal and the symbolic powers are integrally closed, then $\sdef_I(n) = \isdef_I(n)$ for all $n$.  Furthermore, the following conditions are sufficient, albeit artificial:
\begin{prop}
\begin{enumerate} 

    \item If $\mu(\overline{I^{(n)}}) \not\sim \dim_k K_n - \mu(\overline{I^n})$, where $K_n = \mathfrak m \overline{I^{(n)}} \cap \overline{I^n} / \mathfrak m \overline{I^n}$, then $\sdef_I(n) = O(\isdef_I(n))$.
    \medskip
    
    \item If $\mu(I^{(n)} \cap \overline{I^n} / I^n) = O(\isdef_I(n))$, then $\sdef_I(n) = \Theta(\isdef_I(n))$.

\end{enumerate}
\end{prop}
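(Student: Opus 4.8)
The plan is to convert both statements into an accounting of minimal generators along the inclusions $I^n\subseteq I^{(n)}$ and $\overline{I^n}\subseteq\overline{I^{(n)}}$, and then feed in the H\`a--Nguy$\tilde{\text{\^e}}$n comparison $\mu(I^n)=\Theta(\mu(\overline{I^n}))$ and $\mu(I^{(n)})=\Theta(\mu(\overline{I^{(n)}}))$. The one algebraic fact I would isolate at the outset is the elementary counting identity: for finitely generated $N\subseteq M$ over $(R,\mathfrak m,k)$, applying $-\otimes_R k$ to $0\to N\to M\to M/N\to 0$ gives
\[
\mu(M)=\mu(M/N)+\mu(N)-\dim_k\frac{N\cap\mathfrak m M}{\mathfrak m N}.
\]
With $(M,N)=(\overline{I^{(n)}},\overline{I^n})$ this reads $\isdef_I(n)=\mu(\overline{I^{(n)}})-\mu(\overline{I^n})+\dim_k K_n$, and with $(M,N)=(I^{(n)},I^n)$ it reads $\sdef_I(n)=\mu(I^{(n)})-\mu(I^n)+\dim_k K'_n$, where $K'_n=(\mathfrak m I^{(n)}\cap I^n)/\mathfrak m I^n$. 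I would also record the crude unconditional bound $\sdef_I(n)\le\mu(I^{(n)})$ (the module $I^{(n)}/I^n$ is a quotient of $I^{(n)}$), so $\sdef_I(n)=O(\mu(\overline{I^{(n)}}))$ by H\`a--Nguy$\tilde{\text{\^e}}$n.

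For part (1): the first identity rearranges to $\mu(\overline{I^{(n)}})=\isdef_I(n)+\big(\mu(\overline{I^n})-\dim_k K_n\big)$, with non-negative error term $\mu(\overline{I^n})-\dim_k K_n$ since $\dim_k K_n\le\mu(\overline{I^n})$. Read through this identity, the hypothesis says precisely that this error term does not asymptotically swamp $\mu(\overline{I^{(n)}})$, i.e.\ $\mu(\overline{I^{(n)}})=O(\isdef_I(n))$; combined with the crude bound on $\sdef_I(n)$ above, this yields $\sdef_I(n)=O(\isdef_I(n))$.

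For part (2): I would work with the map $\varphi\colon I^{(n)}/I^n\to\overline{I^{(n)}}/\overline{I^n}$ induced by the inclusions, which has kernel $(I^{(n)}\cap\overline{I^n})/I^n$, image $J/\overline{I^n}$ with $J:=I^{(n)}+\overline{I^n}$, and cokernel $\overline{I^{(n)}}/J=\overline{J}/J$ (using $\overline{J}=\overline{I^{(n)}}$, since $I^{(n)}\subseteq J\subseteq\overline{I^{(n)}}$). The short exact sequences $0\to\ker\varphi\to I^{(n)}/I^n\to J/\overline{I^n}\to 0$ and $0\to J/\overline{I^n}\to\overline{I^{(n)}}/\overline{I^n}\to\overline{J}/J\to 0$ give $\sdef_I(n)\le\mu(\ker\varphi)+\mu(J/\overline{I^n})$ and $\isdef_I(n)\le\mu(J/\overline{I^n})+\mu(\overline{J}/J)$, while $\mu(J/\overline{I^n})\le\sdef_I(n)$ because $J/\overline{I^n}\cong I^{(n)}/(I^{(n)}\cap\overline{I^n})$ is a quotient of $I^{(n)}/I^n$, and $\mu(\overline{J}/J)\le\isdef_I(n)$ because $\overline{J}/J$ is a quotient of $\overline{I^{(n)}}/\overline{I^n}$. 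The hypothesis controls the kernel, $\mu(\ker\varphi)=\mu\!\big((I^{(n)}\cap\overline{I^n})/I^n\big)=O(\isdef_I(n))$. To extract $\sdef_I(n)=\Theta(\isdef_I(n))$ from these one must then sharpen the bounds on $\mu(J/\overline{I^n})$ and $\mu(\overline{J}/J)$ into $O(\isdef_I(n))$ and $o(\isdef_I(n))$ (or a constant $<1$ fraction of $\isdef_I(n)$) respectively; for this I would re-apply the counting identity to $I^{(n)}\subseteq\overline{I^{(n)}}$ and to $I^{(n)}\cap\overline{I^n}\subseteq\overline{I^n}$, expressing everything in terms of $\mu(\overline{I^{(n)}}),\mu(\overline{I^n}),\mu(I^{(n)})$, the kernel module, and finite ``error'' modules, and then absorb the residual $\mu(\overline{I^n})$-contribution using H\`a--Nguy$\tilde{\text{\^e}}$n together with the hypothesis once more.

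The main obstacle I anticipate is exactly this last step. Since $\mu$ is not monotone along submodules, the inclusion $J/\overline{I^n}\hookrightarrow\overline{I^{(n)}}/\overline{I^n}$ and the cokernel $\overline{J}/J$ do not come with visibly small surjective covers, so one is forced through the counting identity, where the hidden multiplicative constants in the $\Theta$-comparisons make the asymptotic bookkeeping touchy — in particular one must rule out that the quasi-polynomial $\isdef_I(n)$ vanishes on a residue class on which the error terms persist. That sensitivity is presumably why the authors describe the two sufficient conditions as artificial.
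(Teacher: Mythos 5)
Your treatment of part (1) is essentially the same as the paper's. The paper writes $\isdef_I(n)=\mu(\overline{I^{(n)}})+P(n)$ with $P(n)=\dim_k K_n-\mu(\overline{I^n})$ and bounds $F_n=\sdef_I(n)/\isdef_I(n)\le\mu(I^{(n)})/(\mu(\overline{I^{(n)}})+P(n))$, then does a small case analysis on $\deg P$ to conclude boundedness; you instead go directly from the hypothesis to $\mu(\overline{I^{(n)}})=O(\isdef_I(n))$ and then use $\sdef_I(n)\le\mu(I^{(n)})=\Theta(\mu(\overline{I^{(n)}}))$. Same counting identity, same use of H\`a--Nguy$\tilde{\text{\^e}}$n, slightly more streamlined presentation.

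For part (2) you have correctly identified the same structure the paper uses: the map $\varphi\colon I^{(n)}/I^n\to\overline{I^{(n)}}/\overline{I^n}$ and the resulting four-term exact sequence
\[
0\to (I^{(n)}\cap\overline{I^n})/I^n \to I^{(n)}/I^n \to \overline{I^{(n)}}/\overline{I^n}\to C_n\to 0,
\]
which the paper produces via the snake lemma on the $2\times 3$ diagram of inclusions. Where the paper then simply asserts, ``Tensoring by $k$ and applying Nakayama's lemma again, we have $\sdef_I(n)\le\mu\big((\overline{I^n}\cap I^{(n)})/I^n\big)+\isdef_I(n)$,'' you instead stop and flag the obstruction: the image $J/\overline{I^n}$ of $\varphi$ sits inside $\overline{I^{(n)}}/\overline{I^n}$ only as a \emph{submodule}, and $\mu$ is not monotone along submodules, so you cannot bound $\mu(J/\overline{I^n})$ by $\isdef_I(n)$. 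Your caution is warranted: the inference the paper makes is not valid as stated for an arbitrary four-term exact sequence $0\to A\to B\to C\to D\to 0$ over a local ring; e.g.\ $0\to 0\to\mathfrak m\to R\to k\to 0$ over $R=k[x,y]_{(x,y)}$ has $\mu(B)=2>0+1=\mu(A)+\mu(C)$. So the step you balk at is exactly the one the paper glosses over, and your four recorded inequalities (which are all correct) are not enough on their own to close the argument. You also correctly sense that the $\Theta$-conclusion requires the reverse bound $\isdef_I(n)=O(\sdef_I(n))$, which neither you nor the paper establishes explicitly. In short: your proposal reconstructs the paper's decomposition and gives a correct proof of (1), but for (2) it stops at (and correctly diagnoses) a genuine weak point in the argument rather than resolving it.
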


\begin{proof} We will assume that for $n \gg 0$, $\sdef_I(n)$ and $\isdef_I(n)$ are non-zero, for otherwise the results are immediate.

For (1), consider the short exact sequence,
\[
0 \to \overline{I^n} \to \overline{I^{(n)}} \to \overline{I^{(n)}}/\overline{I^n}\to 0.
\]
By tensoring with $k$ and applying Nakayama's lemma, we get
\[
    \mu(\overline{I^n}) + \isdef_I(n) = \dim_k{K_n} + \mu(\overline{I^{(n)}}),
\]
and thus,
\[
    \isdef_I(n) = \dim_k{K_n} + \mu(\overline{I^{(n)}}) - \mu(\overline{I^n})
\]

Let $P(n) = \dim_k{K_n} - \mu(\overline{I^n})$.  If we consider $F_n = \frac{\sdef_I(n)}{\isdef_I(n)}$, then
\[
F_n \le \frac{\mu(I^{(n)})}{\mu(\overline{I^{(n)}}) +P(n)}.
\]
Since $\mu(I^{(n)}) = \Theta(\mu(\overline{I^{(n)}}))$ \cite[Corollary 4.4]{HN}, we can consider the numerator of the latter fraction to be a polynomial of degree $n^{s\ell(I) - 1}$, and the denominator to be $P(n)$ plus a polynomial of degree $n^{s\ell(I) - 1}$.  If $\deg(P(n)) > s\ell(I) - 1$, then $F_n \to 0$.  If $\deg(P(n)) < s\ell(I) - 1$, $F_n$ is bounded above by a finite value.  If $\deg(P(n)) = s\ell(I) - 1$ but the leading coefficients are not the same, then $F_n$ is still bounded above by a finite value.  Thus, the only case where $F_n$ is not bounded above with this inequality is if $P(n) \sim \mu(\overline{I^{(n)}})$.

For (2), consider the diagram:
\[\xymatrix@C=40pt{
    &0\ar[d]&0\ar[d]&\\
 0\ar[r]&I^n\ar[r] \ar[d] & \overline{I^{n}} \ar[r]\ar[d] & \overline{I^n}/I^n \ar[r]\ar[d] &0\\
  0\ar[r]&I^{(n)} \ar[r] & \overline{I^{(n)}} \ar[r] & \overline{I^{(n)}}/I^{(n)} \ar[r] &0\\
}\]
Apply the snake lemma to obtain
\[
    0 \to \overline{I^n} \cap I^{(n)}/I^n \to I^{(n)}/I^n \to \overline{I^{(n)}}/\overline{I^n} \to C_n \to 0,
\]
where $C_n = \operatorname{coker}(\overline{I^n}/I^n \to \overline{I^{(n)}}/I^{(n)})$.  Tensoring by $k$ and applying Nakayama's lemma again, we have
$\sdef_I(n) \le \mu(\overline{I^n} \cap I^{(n)}/I^n) + \isdef_I(n)$, so
\[
    F_n \le \frac{\mu(\overline{I^n} \cap I^{(n)}/I^n)}{\isdef_I(n)} + 1,
\]
and the claim follows.

\end{proof}
Both of the above conditions are not easy to verify, and it is  unknown what particular ideals satisfy the conditions.  As such, a precise, verifiable relation between $\sdef_I(n)$ and $\isdef_I(n)$ is unknown.

Due to the geometric nature of $\isdef_I(n)$, we can more easily study the growth of $\isdef_I(n)$.  First, we establish the following lemma:

\begin{lem} Let $I, J \subset k[x_1, \ldots, x_r]$ be monomial ideals such that $I \subsetneq J$.  We will say the number of monomials in $J/I$ is finite if there are finitely many elements of $J/I$ with a residue class represented by a monomial.  Then the number of monomials in $M = J/I$ is finite if and only if $\Ass(J/I) = \{\mathfrak m\}$.  Furthermore, if $J/I$ has finitely many monomials, so does $\overline{J}/\overline{I}$.
\end{lem}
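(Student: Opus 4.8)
The plan is to reduce everything to the finiteness of $\dim_k(J/I)$. Because $J$ and $I$ are monomial ideals, the monomials of $R$ lying in $J\setminus I$ descend to distinct nonzero classes forming a $k$-basis of $M:=J/I$; thus ``$M$ has finitely many monomials'' is literally the assertion $\dim_k M<\infty$. Since $M$ is a finitely generated graded module over the Noetherian ring $R=k[x_1,\dots,x_r]$, and $I\subsetneq J$ forces $M\neq 0$, I would invoke the standard dictionary between finite length and associated primes. Concretely: $\dim_k M<\infty$ iff $\mathfrak m^N M=0$ for some $N$ (one direction because $M$, being a subquotient of $R$, is concentrated in degrees $\ge 0$ and vanishes in all large degrees, so $\mathfrak m^N M$ vanishes once $N$ exceeds its top degree; the other because then $M$ is finitely generated over the finite-dimensional $k$-algebra $R/\mathfrak m^N$); $\mathfrak m^N M=0$ iff $\operatorname{Supp}(M)=\{\mathfrak m\}$; and $\operatorname{Supp}(M)=\{\mathfrak m\}$ iff $\Ass(M)=\{\mathfrak m\}$, using that $\Ass(M)\subseteq\operatorname{Supp}(M)$, that the minimal primes of $\operatorname{Supp}(M)$ lie in $\Ass(M)$, and that $\Ass(M)\neq\emptyset$ because $M\neq 0$. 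This chain gives the biconditional.

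For the ``furthermore,'' first note $\overline I\subseteq\overline J$ by monotonicity of integral closure, so $\overline J/\overline I$ makes sense. Assuming $J/I$ has finitely many monomials, the first part yields $N$ with $\mathfrak m^N J\subseteq I$. The key claim is that this propagates: $\mathfrak m^N\overline J\subseteq\overline I$. Granting this, $\overline J/\overline I$ is annihilated by $\mathfrak m^N$, hence is finite-dimensional over $k$, hence has finitely many monomials, and we are done (this covers the degenerate case $\overline I=\overline J$ as well). To prove the claim I would use the monomial description of integral closure, Proposition \ref{prop:intClForm}: it suffices to show that every monomial $\mathbf x^{\mathbf c+\mathbf b}$ appearing in $\mathfrak m^N\overline J$ — where $|\mathbf c|\ge N$ and $\mathbf x^{\mathbf b}\in\overline J$ — lies in $\overline I$. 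Picking $a\ge 1$ with $\mathbf x^{a\mathbf b}\in J^a$, one computes $(\mathbf x^{\mathbf c+\mathbf b})^a=\mathbf x^{a\mathbf c}\,\mathbf x^{a\mathbf b}\in\mathfrak m^{aN}J^a=(\mathfrak m^N J)^a\subseteq I^a$, whence $\mathbf x^{\mathbf c+\mathbf b}\in\overline I$ by Proposition \ref{prop:intClForm} again.

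Almost every step here is routine once the translation to $\dim_k(J/I)$ is in place; the one point that needs genuine care is the implication $\mathfrak m^N J\subseteq I\Rightarrow\mathfrak m^N\overline J\subseteq\overline I$, i.e.\ that integral closure respects this containment. I regard this as the main obstacle in the sense that it is where one must decide between quoting a general-ideal fact (such as $\overline A\,\overline B\subseteq\overline{AB}$) and running the explicit exponent-vector computation above; in the monomial setting the latter is short, elementary, and keeps the argument self-contained using only Proposition \ref{prop:intClForm}.
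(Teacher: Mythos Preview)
Your argument is correct. For the biconditional, you and the paper diverge: the paper argues directly from the monomial structure (if $\mathfrak p\neq\mathfrak m$ is associated, pick a monomial $m$ with $\operatorname{Ann}(m+I)=\mathfrak p$ and $x_i\notin\mathfrak p$, so $\{x_i^f m\}_{f\ge 1}$ gives infinitely many monomials in $M$; conversely, $\Ass(M)=\{\mathfrak m\}$ forces a power of each $x_i$ to annihilate $M$, bounding all exponents), whereas you pass through the standard chain $\dim_k M<\infty\Leftrightarrow \mathfrak m^N M=0\Leftrightarrow\operatorname{Supp}(M)=\{\mathfrak m\}\Leftrightarrow\Ass(M)=\{\mathfrak m\}$. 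Your route is more conceptual and applies verbatim to any nonzero finitely generated graded $R$-module; the paper's stays closer to the combinatorics and avoids invoking the support--associated-prime dictionary. For the ``furthermore,'' the two proofs are essentially the same---both push an annihilation statement through Proposition~\ref{prop:intClForm}---but your packaging via the uniform containment $\mathfrak m^N J\subseteq I$ and the ideal identity $(\mathfrak m^N J)^a=\mathfrak m^{aN}J^a$ is tidier than the paper's per-generator bookkeeping, which fixes generators $n_i$ of $J$, records exponents $b_{ij}$ with $x_j^{b_{ij}}n_i\in I$, writes $m^a=p\,n_{g_1}\cdots n_{g_a}$, and then multiplies by an extra factor $x_j^{(a-1)B_j}$ to make the total exponent divisible by $a$.
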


\begin{proof} Assume $\Ass(M) \ne \{\mathfrak m\}$.  Since $J \ne I$, $\Ass(M) \ne \emptyset$, so there exists $\mathfrak p \in \Ass(M)$, $\mathfrak p \ne \mathfrak m$.  Let $m$ be a monomial such that $\operatorname{Ann}(m + I) = \mathfrak p$, and let $x_i \not\in \mathfrak p$.  Then, for all $f \ge 1$, $x_i^f m \not\in I$.  Thus, there are infinitely many monomials in $M$.

Conversely, assume $\Ass(M) = \{\mathfrak m\}$.  Then, for every $x_i$, there exists $f_i$ such that, for any $m \in J$, $x_i^{f_i}m \in I$.  That is, there is an upper-bound for each exponent of a non-zero monomial in $M$, hence only finitely many monomials exist.

Now assume $J/I$ has finitely many monomials so that $\Ass(J/I) = \{\mathfrak m\}$.  If $\overline{J} = \overline{I}$, the module is trivial.  Otherwise, letting $J = (n_1, \ldots, n_s)$, this means that, for each $i \in \{1, \ldots, s\}$ and $j \in \{1, \ldots, r\}$, there exists $b_{ij}$ such that $x_j^{b_{ij}}n_i \in I$.  Now, let $m \in \overline{J}$.  Then, by Proposition \ref{prop:intClForm}, for some $a \ge 1$, $m^a \in J^a$.  Let $m^a = pn_{g_1}\cdots n_{g_a}$, where $p$ is some monomial and each $g_\ell \in \{1, \ldots, s\}$, perhaps not all distinct.  Let $B_j = \sum {b_{g_aj}}$.  Multiplying by $x_j^{B_j}$, we get 
\[
    x_j^{B_j}m^a = p(x_j^{b_{g_1}j}n_{g_1}\cdots x_j^{b_{g_a}j}n_{g_a}) \in I^a.
\]
Further multiplying the left-hand side by $x_j^{(a-1)B_j}$ yields $x_j^{aB_j}m^a = (x_j^{B_j}m)^a \in I^a$.  Thus, $x_j^{B_j}m \in \overline{I}$.  This holds for all $j \in \{1, \ldots r\}$, so every element of $\overline{J}/\overline{I}$ is annihilated by a power of $\mathfrak m$, so the only associated prime is $\mathfrak m$.
\end{proof}

\begin{lem}\label{lem:satFinite} The modules $(I^n)^{\text{sat}}/I^n$ and $\overline{(I^n)^{\text{sat}}}/\overline{I^n}$ have finitely many monomials.
\end{lem}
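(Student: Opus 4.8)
The plan is to reduce the statement to the previous lemma by showing that each of the two modules has $\mathfrak m$ as its only associated prime. For the first module, $M = (I^n)^{\text{sat}}/I^n$, I would observe that saturation is precisely the operation that removes the $\mathfrak m$-primary component: by definition $(I^n)^{\text{sat}} = \bigcup_{j}(I^n : \mathfrak m^j)$, so every element of $M$ is annihilated by some power of $\mathfrak m$, hence $\Ass(M) \subseteq \{\mathfrak m\}$. If $M \ne 0$ then $\Ass(M) = \{\mathfrak m\}$ exactly, and the previous lemma (applied with $J = (I^n)^{\text{sat}}$, $I = I^n$, which are monomial ideals with $I^n \subseteq (I^n)^{\text{sat}}$) gives that $M$ has finitely many monomials. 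If $M = 0$ the claim is trivial.

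For the second module, $\overline{(I^n)^{\text{sat}}}/\overline{I^n}$, the cleanest route is to invoke the \emph{second} conclusion of the previous lemma directly: it states that if $J/I$ has finitely many monomials, then so does $\overline{J}/\overline{I}$. Taking $J = (I^n)^{\text{sat}}$ and $I = I^n$, the first part of the argument shows $J/I$ has finitely many monomials, and therefore $\overline{J}/\overline{I} = \overline{(I^n)^{\text{sat}}}/\overline{I^n}$ does as well. (One should note in passing that $\overline{I^n} \subseteq \overline{(I^n)^{\text{sat}}}$ since integral closure is order-preserving, so the quotient makes sense; and if the two integral closures coincide the module is zero and there is nothing to prove.)

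The only genuine subtlety — and the step I would be most careful about — is verifying that the hypotheses of the previous lemma are actually met, namely that $(I^n)^{\text{sat}} \ne I^n$ is not needed (the lemma handles the equality case as the trivial module), but more importantly that $(I^n)^{\text{sat}}$ really is a monomial ideal. This is standard: $I^n$ is a monomial ideal, $\mathfrak m$ is a monomial ideal, and both colon ideals $(I^n : \mathfrak m^j)$ and their union are again monomial, so $(I^n)^{\text{sat}}$ is monomial. Everything else is a direct citation of the preceding lemma; there is no real calculation to grind through.

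One alternative, if one prefers not to lean on the "furthermore" clause of the previous lemma, is to argue directly for $\overline{(I^n)^{\text{sat}}}/\overline{I^n}$ that its associated primes are contained in $\{\mathfrak m\}$: take a monomial $m \in \overline{(I^n)^{\text{sat}}}$, use Proposition \ref{prop:intClForm} to write $m^a \in ((I^n)^{\text{sat}})^a$ for some $a \ge 1$, and use that a suitable power $\mathfrak m^{j}$ kills the quotient $(I^n)^{\text{sat}}/I^n$ to push a high power of $\mathfrak m$ times $m^a$ into $(I^n)^a \subseteq \overline{(I^n)^a} = \overline{I^n}^{\,a}$ — wait, more carefully into $(I^a)$-type powers — and then extract that $\mathfrak m^{j'} m \subseteq \overline{I^n}$ by the same $(x^{aB})m^a = (x^B m)^a$ trick used in the proof of the previous lemma. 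But since that trick is exactly what the "furthermore" clause already packages, the one-line citation is preferable, and I would present the proof in the short form above.
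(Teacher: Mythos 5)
Your proof is correct and follows essentially the same route as the paper's: both arguments show that every element of $(I^n)^{\text{sat}}/I^n$ is annihilated by a power of $\mathfrak m$ (you phrase this via $(I^n)^{\text{sat}} = \bigcup_j (I^n : \mathfrak m^j)$; the paper phrases it via variable-by-variable powers $x_i^{b_i}m \in I^n$), conclude that $\mathfrak m$ is the only possible associated prime, and then cite the preceding lemma — including its ``furthermore'' clause for passing to integral closures — to finish. The remarks you add (monomiality of $(I^n)^{\text{sat}}$, the trivial zero-module case) are sound but not points the paper dwells on.
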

\begin{proof} If $I^n = (I^n)^{\text{sat}}$, then the quotient is the zero module, hence contains only finitely many monomials.

For all other cases, by the previous lemma, it suffices to show that the only associated prime of $(I^n)^{\text{sat}}/I^n$ is $\mathfrak m$.


We would like to thank the anonymous referee for the following improved proof of the result: suppose $\mathfrak p$ is an associated prime of $(I^n)^{\text{sat}}/I^n$.  So there exists $m \in (I^n)^{\text{sat}}$ such that $\operatorname{Ann}(m + I^n) = \mathfrak p$.  On the other hand, since $m \in (I^n)^{\text{sat}}$, for each $i \in {1, \ldots, r}$, there is $b_i$ such that $x_i^{b_i}m \in I^n$.  So $x^{b_i} \in \operatorname{Ann}(m + I^n)$, and since $\mathfrak p$ is prime, $x_i \in \mathfrak p$ for all $i$.  Thus, $\mathfrak m \subseteq \mathfrak p$, i.e. $\mathfrak m = \mathfrak p$.

\end{proof}

The above lemma is necessary so that we may use the theory of Ehrhart (quasi)-polynomials.  Now, we may prove Theorem \ref{thm:isdef}:

\begin{proof}[Proof of Theorem \ref{thm:isdef}] First, suppose $\mathfrak m$ is an associated prime.  By Lemma \ref{primdecomppowers}, $I^n = I^{(n)}$ for all $n$, and thus $\isdef_I(n) = 0$.  So from hereon we assume $\mathfrak m$ is not associated.

Let $\{P_i\}$ be the facets of $\SP(I)\setminus \NP(I)$.  Each $P_i$ is bounded, as follows from Remark \ref{rem:satSym} and Lemma \ref{lem:satFinite}.  Furthermore, each $P_i$ is supported on a hyperplane, so they verify an equation, $P_i : E_i(\mathbf u) = c_i$, where $E_i(\mathbf u) = a_{i1}u_{1} + \cdots + a_{ir} u_r$, where we can choose $a_{i\ell}, c_i \in \Z_{\ge 0}$.  As follows from Definition \ref{def:SP}, each face of $n\SP(I)$ is parallel to at least one coordinate hyperplane.  For a given $P_i$, let $\{j_1^{(i)}, \ldots, j_d^{(i)}\}$ denote the indices such that $P_i$ is parallel to the $u_1\cdots u_{j_\ell^{(i)} - 1}u_{j_\ell^{(i)} + 1}\cdots u_r$ coordinate plane (equivalently, such that $a_{ij^{(i)}_\ell} = 0$).  Let $P_{ij}$ denote the projection of $P_i$ into the $u_1\cdots u_{j - 1}u_{j + 1}\cdots u_r$ coordinate plane.  Finally, let $P_{ijs}^n$ be the region in the $u_1\cdots u_{j - 1}u_{j + 1}\cdots u_r$ hyperplane bounded below by $nP_{ij}$ and above by $nP_{ij} + a_{is} \mathbf e_s$.  

We will show that the number of minimal elements of $n\SP(I) \setminus n\NP(I)$ is at most, the number of lattice points in 
\[P\coloneqq \underset{{\substack{j \in \{j_1^{(i)}, \ldots, j_d^{(i)}\} \\ s \in \{1, \ldots, r\}}}}{\bigsqcup} P_{ij^{(i)}s}^n.\]  Indeed, let $m$ be a minimal element in $n\SP(I) \setminus n \NP(I)$.  For each coordinate $v_t$ of $m$, there is a face $P_i$ such that moving back in the $u_t$ direction by one crosses $P_i$.  This means that $c_i n \le E_i(m) < c_in + a_{it}$.  Thus, projecting $m$ into any of the parallel coordinate planes lands it in $P_{ij^{(i)}t}^n$.

By Lemma \ref{lem:satFinite}, each $P_{ij^{(i)}s}^n$ has finite volume, and thus is, at most, an $r-1$ dimensional polytope.  We observe how $P_{ij^{(i)}s}^n$ changes as $n$ increases.  In every direction but the $u_s$-th direction, the length of $P_{ij^{(i)}s}^n$ is scaled by $n$.  However, in the $u_s$-th direction, the length is always $a_{is}$.  Thus, the number of integer points grows like scaling $a_{is}$ different $(r-2)$-dimensional polytopes, which, by Ehrhart (quasi)-polynomials \cite{EhrhartPoly}, gives a (quasi)-polynomial of degree $r-2$.
\end{proof}


\section{Calculating Symbolic Defect with the Symbolic Polyhedron}

We will exploit the symmetries of the ideal $I = (xy, xz, yz) = (x, y) \cap (x, z) \cap (y, z)$ to provide an explicit formula for $\sdef_I(n)$.  While this result can also be calculated with graph-theoretic techniques \cite{DG}, this displays an alternative method.  Furthermore, this method seems to be special to this ideal, but could potentially have some expansion into higher dimensions.

It is first important to note that, since $\dim(R/I) = 1$, $(I^n)^{\sat} = I^{(n)}$ for all $n$.  Furthermore, $I$ satisfies $I^n = \overline{I^n}$ and $I^{(n)} = \overline{I^{(n)}}$ for all $n \ge 1$ (see Lemma \ref{lem:intClFam}), so $\sdef_I(n) = \isdef_I(n)$.  Thus, from Theorem \ref{thm:isdef}, we a priori know that $\sdef_I(n) = O(n)$.

\begin{prop}\label{edgegens} Let $M = I^{(n)}/I^n$.  Then $M = (x^ay^bz^c : \frac32 n \le a + b + c < 2n, \text { and one of }
a = b, b = c, \text { or } a = c)$.  In other words, it is generated by the points on the edges of $n\SP(I) \setminus n \NP(I)$.
\end{prop}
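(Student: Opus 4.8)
The plan is to describe $M=I^{(n)}/I^n$ through its monomial $k$-basis and then ``peel'' monomials one variable at a time until two exponents collide. First I would record the basis: by Lemma~\ref{primdecomppowers}, $I^{(n)}=(x,y)^n\cap(x,z)^n\cap(y,z)^n$, so $x^ay^bz^c\in I^{(n)}$ exactly when $a+b\ge n$, $a+c\ge n$, $b+c\ge n$; and $x^ay^bz^c\in I^n$ exactly when, in addition, $a+b+c\ge 2n$, since $I^n=\overline{I^n}$ (Lemma~\ref{lem:intClFam}) and $\NP(I)$ is as computed in Example~\ref{ex:easyExample}. Hence a monomial has nonzero image in $M$ precisely when
\[
a+b\ge n,\qquad a+c\ge n,\qquad b+c\ge n,\qquad a+b+c\le 2n-1,
\]
and these monomials form a $k$-basis of $M$ (equivalently, they are the lattice points of $n\SP(I)\setminus n\NP(I)$). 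I also note a cheap observation used at the end: if such a monomial has two equal exponents, say $a=b$, then $a\ge n/2$ from $2a\ge n$ and $c\ge n-a$ from $a+c\ge n$, so $a+b+c\ge\tfrac32 n$ automatically; thus the lower bound $\tfrac32 n\le a+b+c$ in the statement is free once one knows the monomial lies in $I^{(n)}$, and what must be proved is that the monomials of $M$ with two equal exponents generate $M$.

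The crux is a peeling lemma: \emph{if $x^ay^bz^c$ has nonzero image in $M$ and $a,b,c$ are pairwise distinct, then decreasing the largest exponent by $1$ again yields a monomial with nonzero image in $M$.} Using that $I$ is invariant under permuting $x,y,z$, I may assume $a>b>c\ge 0$, so $a\ge 2$, and I claim $(a-1,b,c)$ works. The bound $a-1+b+c\le 2n-2$ is immediate, so the only inequalities to verify are $(a-1)+b\ge n$ and $(a-1)+c\ge n$, i.e.\ $a+b>n$ and $a+c>n$. If $a+b=n$, then $a+c\ge n$ forces $c\ge n-a=b$, contradicting $c<b$; if $a+c=n$, then $b+c\ge n$ forces $b\ge n-c=a$, contradicting $b<a$. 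This proves the lemma. (The statement genuinely fails when two exponents are tied — with $n=4$ one can peel nothing off $x^3y^3z$ — which is exactly why such monomials must be kept as generators.)

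Granting the lemma, I finish by descent on total degree. Given any monomial $m$ with nonzero image in $M$: while its exponents are pairwise distinct, apply the lemma to pass to a monomial of strictly smaller degree still having nonzero image in $M$; since at each step the largest exponent drops by one and the other two are unchanged, the process halts after finitely many steps at a monomial $m^\ast$ with nonzero image in $M$ and at least two equal exponents. Then $m$ is a monomial multiple of $m^\ast$, and $m^\ast$ is one of the generators listed in the proposition: it lies in $I^{(n)}$, has total degree $<2n$ (hence $\ge\tfrac32 n$ by the observation above), and has two equal exponents. Since monomials span $M$ over $k$, the listed monomials generate $M$; the reverse inclusion is clear, and the geometric rephrasing is just the translation ``two equal exponents'' $\leftrightarrow$ ``on a symmetry plane $u=v$, $v=w$ or $u=w$'' for lattice points of $n\SP(I)\setminus n\NP(I)$.

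I expect the real work to be the peeling lemma and, more quietly, the very first step: one must be sure the identities $I^{(n)}=(x,y)^n\cap(x,z)^n\cap(y,z)^n$ and $I^n=\overline{I^n}$ are in force, so that membership of a monomial in $I^{(n)}$, resp.\ in $I^n$, is \emph{exactly} the numerical condition above — in general the lattice points of $n\SP(I)$ or $n\NP(I)$ need not correspond to monomials in the ideal — and one must keep track of the tie cases, where peeling fails. The descent itself is routine.
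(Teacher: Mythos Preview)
Your argument is correct and follows essentially the same route as the paper: both reduce an arbitrary monomial of $M$ to one with two equal exponents by stepping down one unit at a time in total degree. The paper phrases this as an induction on $N=a+b+c$ and simply asserts that if $(a,b,c)$ is not on an edge then one of $(a-1,b,c)$, $(a,b-1,c)$, $(a,b,c-1)$ remains in $n\SP(I)\setminus n\NP(I)$; you instead specify \emph{which} neighbor works (decrease the strict maximum) and verify the two relevant inequalities $a+b>n$, $a+c>n$ by a clean contradiction using the ordering $a>b>c$. Your version is thus a more explicit instantiation of the same idea, and your preliminary justification that monomial membership in $I^{(n)}$ and in $I^n$ is detected exactly by the lattice conditions (via Lemma~\ref{primdecomppowers} and $I^n=\overline{I^n}$) is a point the paper leaves implicit.
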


\begin{proof} By Example \ref{ex:easyExample},
\[
    n\SP(I) = \begin{cases} u + v \ge n\\
    u + w \ge n\\
    v + w \ge n\\
    u, v, w \ge 0.
    \end{cases}
\]
Note that the intersection of the first three planes is $(n/2, n/2, n/2)$.  Thus, no element of $n\SP(I)$ has all three entries less than $n/2$.  In particular, an element $(a, b, c) \in n\SP(I)$ satisfies $a + b + c \ge \frac32 n$.  Since $n\NP(I) = n \SP(I) \cap \{ u + v + w \ge 2n\}$, $x^ay^bz^c \in M$ satisfies $\frac32 n \le a + b + c < 2n$.

Now, fix some $n \ge 1$, and let $x^ay^bz^c \in M$.  We will show that there exists a ``path" of unit vectors from an edge lattice point of $n\SP(I) \setminus n\NP(I)$ to $(a, b, c)$.

We use induction on $N = a + b + c$.  If $n$ is even, the base case is that $N = \frac32n$, and $(a, b, c) = (n/2, n/2, n/2)$, which is an lattice point on the edge itself.  If $n$ is odd, $N = \frac32n + \frac 12$, and so $(a, b, c)$ is some permutation of $(\frac 12 (n + 1), \frac 12 (n + 1), \frac 12 (n - 1))$, all of which lie on the edges.

Now, assume the claim holds for $N$ and suppose $(a, b, c) \in M$ satisfy $a + b + c = N + 1$.  If $(a, b, c)$ is on an edge, we are done.  Otherwise, one of $(a - 1, b, c)$, $(a, b-1, c)$, or $(a, b, c-1)$ is in $n\SP(I)\setminus n\NP(I)$, and thus has a path of coordinate unit vectors up to it.  Simply add the appropriate coordinate vector in the appropriate direction.
\end{proof}

\begin{rem} The generation of $M$ in the above proposition is a minimal generation.  Indeed, any monomial corresponding to a point on an edge, of $n\SP(I) \setminus n\NP(I)$, $m$, satisfies, by nature, $x\inv m, y\inv m, z \inv m \not \in I^{(n)}$.  The purpose of Proposition $\ref{edgegens}$ is to demonstrate that they, in fact, generate the entirety of $M$.
\end{rem}

\begin{proof}[Proof of Example \ref{prop:easyExsdef}]  By Proposition $\ref{edgegens}$, this is equivalent to counting lattice edge points on $n\SP(I)\setminus n\NP(I)$.  It is clear that for every increase of $N$ with $\frac32 n \le N < 2n$, there are three more edge points.  If $n$ is even, we get only one point at $N = \frac 32n$.  So, we get $3\left(\frac12 (n - 2)\right) + 1 = \frac 32 n - 2$ for $n$ even, and $3\left(\frac12 (n - 1)\right) = \frac 32 n - \frac 32$ for $n$ odd.
\end{proof}

\section{Calculating Symbolic Defect from the Symbolic Polyhedron, pt. II}

In this section, we use the symbolic polyhedron to determine symbolic defect for the family of ideals, $I = (x^a, y) \cap (y^b, z) \cap (z^c, x) = (xyz, x^az, y^bx, z^cy) \subseteq k[x, y, z]$, utilizing a different method than that used in the previous section.  We choose this family due to the following:

\begin{lem}\label{lem:intClFam} Let $I = (x^a, y) \cap (y^b, z) \cap (z^c, x) = (xyz, x^az, y^bx, z^cy)$.  Then, for all $n > 0$, $I^n = \overline{I^n}$, and $I^{(n)} = \overline {I^{(n)}}$.  Thus, $\sdef_I(n) = \isdef_I(n)$.
\end{lem}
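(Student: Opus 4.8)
The final equality $\sdef_I(n)=\isdef_I(n)$ is immediate from Definitions \ref{def:sdef} and \ref{isdef} once we know $I^n=\overline{I^n}$ and $I^{(n)}=\overline{I^{(n)}}$, so the plan is to prove those two statements; the second is quick and the first is the real content. For the symbolic powers, note that $I$ has no embedded primes (its associated primes are the three height-two monomial primes $(x,y)$, $(y,z)$, $(z,x)$), so Lemma \ref{primdecomppowers} gives
\[
I^{(n)}=(x^a,y)^n\cap(y^b,z)^n\cap(z^c,x)^n .
\]
Each factor is integrally closed: a direct check shows $x^iy^jz^k\in(x^a,y)^n$ if and only if $i/a+j\ge n$, which is exactly the lattice-point condition cutting $\overline{(x^a,y)^n}$ out of $n\NP((x^a,y))$ (Proposition \ref{prop:intClForm}); that is, $(x^a,y)$ is a normal ideal, and likewise $(y^b,z)$ and $(z^c,x)$. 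Since a finite intersection of integrally closed ideals is integrally closed (an element integral over $J_1\cap J_2$ is integral over each $J_i$, hence lies in $\overline{J_i}=J_i$), $I^{(n)}$ is integrally closed.

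It remains to prove $I^n=\overline{I^n}$, i.e.\ that $I$ is normal. Write $g_0=(1,1,1)$, $g_1=(a,0,1)$, $g_2=(1,b,0)$, $g_3=(0,1,c)$ for the generators, so that $\NP(I)=\operatorname{cvxhull}\{g_0,g_1,g_2,g_3\}+\mathbb R^3_{\ge0}$. I would prove $\overline{I^n}=I^n$ by induction on $n$. The case $n=1$ is $\overline I=I$, which follows from the intersection argument above. For $n\ge2$ it suffices to show $\overline{I^n}=I\,\overline{I^{n-1}}$: the inclusion $\supseteq$ is the standard $\overline I\,\overline{I^{n-1}}\subseteq\overline{I^n}$, and for $\subseteq$ take a monomial $\mathbf x^{\mathbf v}\in\overline{I^n}$; since $\mathbf v\in n\NP(I)$ we have $\mathbf v\ge t_0g_0+\cdots+t_3g_3$ componentwise for some reals $t_i\ge0$ with $\sum_i t_i=n$, and if some $t_{i_0}\ge1$ then $\mathbf v-g_{i_0}$ is a lattice point of $(n-1)\NP(I)$, whence $\mathbf x^{\mathbf v}=\mathbf x^{g_{i_0}}\mathbf x^{\mathbf v-g_{i_0}}\in I\,\overline{I^{n-1}}$. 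Because the four coefficients $t_i$ sum to $n$, some $t_{i_0}\ge1$ automatically whenever $n\ge4$; so, combining this with $\overline I=I$ and the induction hypothesis, the whole claim follows once $\overline{I^2}=I^2$ and $\overline{I^3}=I^3$ are checked directly (alternatively, invoking the criterion that a monomial ideal in three variables is normal as soon as $\overline I=I$ and $\overline{I^2}=I^2$, e.g.\ via \cite{HS}, removes the $n=3$ case).

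To check $\overline{I^n}=I^n$ for $n\in\{2,3\}$ it suffices to show every minimal monomial generator $\mathbf x^{\mathbf v}$ of $\overline{I^n}$ lies in $I^n$. Such a $\mathbf v$ is pinched against the facets of $n\NP(I)$ that block it from decreasing in each coordinate direction. The facets of $\NP(I)$ can be read off as follows: writing $I$ as an intersection of pure powers with $\infty$-exponents, Example \ref{ex:purePowers} gives $\SP(I)=\{\,u/a+v\ge1,\ v/b+w\ge1,\ w/c+u\ge1,\ u,v,w\ge0\,\}$, and $\NP(I)$ is obtained from $\SP(I)$ by adjoining either the single lower facet through $g_1,g_2,g_3$ with normal $(\alpha/a,\beta/b,\gamma/c)$ (the triple of Example \ref{ex:sdefCalc}), when $g_0$ lies on or above that plane, or else the three triangular facets through $g_0$ otherwise. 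A case analysis by which of these facets pinch $\mathbf v$ then produces an integral relation $\mathbf v\ge\lambda_0g_0+\cdots+\lambda_3g_3$ with $\sum_i\lambda_i=n$, that is, $\mathbf x^{\mathbf v}\in I^n$, contradicting the choice of $\mathbf v$.

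The main obstacle is precisely this last step: the facet structure of $\NP(I)$ genuinely varies with $(a,b,c)$ — the plane through $g_1,g_2,g_3$ contains $g_0$ exactly when $ab+bc+ca-a-b-c+2=abc$ (equality at $a=b=c=2$), is above it for smaller parameters, and cuts it off for larger ones — so verifying $\overline{I^2}=I^2$ (and $\overline{I^3}=I^3$, absent the three-variable normality criterion) is not one finite computation but requires an organized split according to the position of $g_0$ relative to that plane. Everything else in the argument is routine.
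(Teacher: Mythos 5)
Your overall architecture is correct and tracks the paper's: you prove $I^{(n)}=\overline{I^{(n)}}$ exactly as the paper does, by writing $I^{(n)}=\bigcap(Q_i)^n$ (Lemma \ref{primdecomppowers}), observing each $(Q_i)^n$ is integrally closed, and using that intersections of integrally closed ideals are integrally closed; and for $I^n=\overline{I^n}$ you correctly identify that it suffices to check $\overline{I}=I$ and $\overline{I^2}=I^2$, citing the three-variable normality criterion \cite[Theorem 1.4.10]{HS}, which is precisely the paper's route. (Your primary pigeonhole induction on the four generator vectors is sound but strictly weaker than this criterion, since it would additionally require the base case $\overline{I^3}=I^3$.)

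The gap is that the verification of $\overline{I^2}=I^2$ is not actually carried out, and that verification is essentially the whole content of the lemma. You correctly diagnose that one must study how a monomial in $2\NP(I)$ is ``pinched'' against the facets of $\NP(I)$, and you correctly note that the facet structure of $\NP(I)$ shifts with $(a,b,c)$ (whether $g_0=(1,1,1)$ is or is not a vertex of $\NP(I)$, governed by $ab+bc+ca-a-b-c+2$ versus $abc$), but ``this requires an organized split according to the position of $g_0$'' is a plan, not a proof. The paper completes this step: it writes down an explicit list of six defining inequalities for $\NP(I)$ (three from the pure-power intersections and three triangular facets through $g_0$), and then handles the lattice points of $2\NP(I)$ by a finite case analysis (some coordinate is $0$; all coordinates $\geq 2$; all positive with some coordinate equal to $1$, the last resolved by showing the point dominates one of two explicit products of generators). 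Until you supply an argument at this level of detail, the claim $\overline{I^2}=I^2$ — and hence the lemma — remains unproven.
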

\begin{proof} 
By \cite[Theorem 1.4.10]{HS}, if $I$ is a monomial ideal in $k[x,y,z]$ such that $I$ and $I^2$ are integrally closed, then $I^n = \overline{I^n}$ for all $n > 0$. It is then sufficient to check the $n=1$ and $n=2$ cases. The ideal $I$ is an intersection of integrally closed ideals, hence, from \cite[Corollary 1.3.1]{HS}, it is itself integrally closed. For $n=2$, we study the lattice points in ${\rm NP}(I)$. A sketch of this convex body is given in Figure \ref{fig:npi}.

\begin{figure}
    \centering
        \includegraphics[scale=0.5]{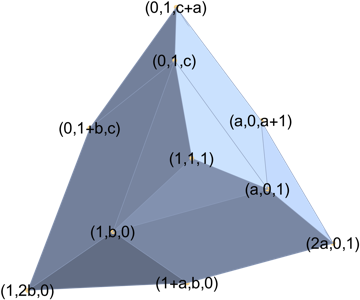}
    \caption{${\rm NP}(I)$}
    \label{fig:npi}
\end{figure}

The vertices of the 3 central triangle faces correspond to the generators of $I$. The other faces stretch to infinity. The angular infinite faces are the intersections with the coordinate planes. Each of the remaining 3 faces represented by parallelograms are parallel to a coordinate line. Concretely, ${\rm NP}(I)$ is described by the \emph{non-negative} solutions of 

$\begin{cases}
   u+av\geq a\\
   v+bw\geq b\\
   cu+w\geq c
\end{cases}$ and $\begin{cases}
   (c-1)u+(a-1)(c-1)v+w\geq (a-1)(c-1)+c\\
   (b-1)(c-1)u+v+(b-1)w\geq(b-1)(c-1)+b\\
   u+(a-1)v+(a-1)(b-1)w\geq (a-1)(b-1)+a.
\end{cases}$

The monomials in $\overline{I^2}$ correspond to the \emph{nonnegative} integer solutions of

$\begin{cases}
   u+av\geq 2a\\
   v+bw\geq 2b\\
   cu+w\geq 2c
\end{cases}$ and 
$\begin{cases}
   (c-1)u+(a-1)(c-1)v+w\geq 2(a-1)(c-1)+2c\\
   (b-1)(c-1)u+v+(b-1)w\geq2(b-1)(c-1)+2b\\
   u+(a-1)v+(a-1)(b-1)w\geq 2(a-1)(b-1)+2a.
\end{cases}$

From the picture, we see that if one of the coordinates is 0, say $w=0$, then we are inside the right triangle with vertex at $(1,b,0)$, meaning a monomial that is a multiple of $(xy^b)^2$, so in $I^2$. We may thus assume that $u,v,w\geq 1$. If they are all at least 2, then we get a monomial that is a multiple of $(xyz)^2$, so again in $I^2$. It remains to treat the case of positive coordinates, but at least one of them is exactly 1. 
By symmetry, without loss of generality, assume $w=1$. We will prove that every possible solution dominates either $(2,b+1,1)$ or $(a+1,b,1)$, meaning that the corresponding monomial in $\overline{I^2}$ is divisible by one of the elements $(y^bx)(xyz)$ or $(x^az)(y^bx)$ in $I^2$.

Write $u=1+U$ and $v=1+V$ with $U,V\geq 0$. We want to prove that $(U,V)$ dominates (componentwise) $(1,b)$ or $(a,b-1)$.
The inequalities we know are
$$\begin{cases}
   U+aV\geq a-1\\
   V\geq b-1\\
   cU\geq c-1\\
   (c-1)U+(a-1)(c-1)V\geq (a-1)(c-1)+c\\
   (b-1)(c-1)U+V+(b-1)\geq(b-1)(c-1)+b\\
   U+(a-1)V\geq (a-1)(b-1)+a.
\end{cases}$$

The fourth is impossible if $c=1$. The third then gives $U\geq 1$. The second has $V\geq b-1$. If $V\geq b$, then $(U,V)$ dominates $(1,b)$ and we are done. The remaining case is $V=b-1$. From the sixth inequality, we obtain $U\geq a$, i.e., $(U,V)$ dominates $(a,b-1)$.

We now prove that $I^{(n)}$ is integrally closed. 
The primary decomposition of $I$ has no embedded primes. For every primary ideal $Q$ in this decomposition we observe that $Q^n$ is integrally closed for all $n$. Intersections of integrally closed ideals are integrally closed, so conclude by Lemma \ref{primdecomppowers}.
\end{proof}

\begin{rem}
    From Definition \ref{def:SP}, it follows that ${\rm SP}(I)$ is described by the nonnegative solutions to the inequalities $\begin{cases}
   u+av\geq a\\
   v+bw\geq b\\
   cu+w\geq c
\end{cases}$.
Their simultaneous equality case is the vertex
\[
P = ( \alpha, \beta, \gamma) = \left( \frac{a(bc - b + 1)}{abc + 1}, \frac{b(ac - c + 1)}{abc + 1}, \frac{c(ab - a + 1)}{abc + 1} \right).
\]
In Figure \ref{fig:spi} we see that the inner triangular faces of Figure \ref{fig:npi} disappear, and the (infinite) parallelogram shaped faces are now shaped like pentagons (in fact they are quadrilaterals with a vertex at infinity in the direction of a corresponding coordinate line) with common vertex $P$.

\begin{figure}
\begin{minipage}{.5\textwidth}
  \centering
  \includegraphics[width=.7\linewidth]{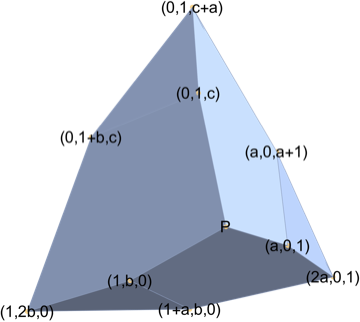}
  \captionof{figure}{${\rm SP}(I)$}
  \label{fig:spi}
\end{minipage}%
\begin{minipage}{.5\textwidth}
  \centering
  \includegraphics[width=.6\linewidth]{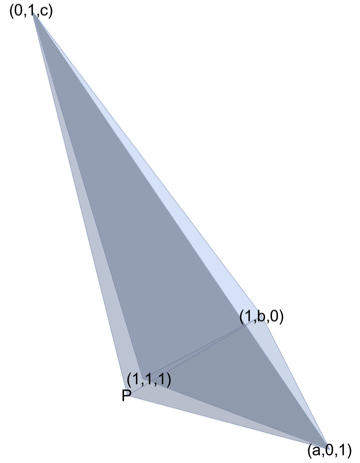}
  \captionof{figure}{${\rm SP}(I)\setminus{\rm NP}(I)$}
  \label{fig:isdef}
\end{minipage}
\end{figure}

Then ${\rm isdef}_I$ is determined by Figure \ref{fig:isdef} where from the larger tetrahedron with vertex $P$ we exclude a smaller tetrahedron with vertex $(1,1,1)$ and same opposite face.



\end{rem}

\smallskip

In conjunction with Example \ref{ex:purePowers}, we will focus on the points ``just inside" the polyhedron, $n \SP(I) \setminus n \NP(I)$.  Upon inspection, this polyhedron forms a tetrahedron with an indented base. The first three planes in the polyhedron meet at the point $P$ above.

We require a few lemmas before our calculation. 

\begin{prop}\label{floorcond} Let $n \in \Z_{\ge 0}$.  A monomial $x_i^{b_i}x_j^{b_j} \in (x_i^{a_i}, x_j^{a_j})^n$ if and only if $\left\lfloor \frac {b_i} {a_i} \right\rfloor + \left\lfloor \frac {b_j} {a_j} \right\rfloor \ge n$.
\end{prop}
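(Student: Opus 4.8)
The plan is to reduce everything to the explicit monomial generating set of $(x_i^{a_i},x_j^{a_j})^n$ and then carry out a short numerical argument. First I would record that, since $(x_i^{a_i},x_j^{a_j})$ is generated by two pure powers, its $n$-th power is the monomial ideal
\[
(x_i^{a_i},x_j^{a_j})^n=\bigl(\,x_i^{a_i k}x_j^{a_j(n-k)}\ :\ 0\le k\le n\,\bigr),
\]
so a monomial lies in this ideal if and only if it is divisible by one of these generators. Thus $x_i^{b_i}x_j^{b_j}\in(x_i^{a_i},x_j^{a_j})^n$ is equivalent to the existence of an integer $k$ with $0\le k\le n$, $a_i k\le b_i$, and $a_j(n-k)\le b_j$.

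For the ($\Rightarrow$) direction, suppose such a $k$ exists. From $a_ik\le b_i$ and $a_j(n-k)\le b_j$ we get $k\le b_i/a_i$ and $n-k\le b_j/a_j$; since $k$ and $n-k$ are integers, $k\le\lfloor b_i/a_i\rfloor$ and $n-k\le\lfloor b_j/a_j\rfloor$, and adding gives $\lfloor b_i/a_i\rfloor+\lfloor b_j/a_j\rfloor\ge n$.

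For the ($\Leftarrow$) direction, assume $\lfloor b_i/a_i\rfloor+\lfloor b_j/a_j\rfloor\ge n$ and set $k\coloneqq\min\{n,\lfloor b_i/a_i\rfloor\}$, which automatically satisfies $0\le k\le n$ and $a_ik\le a_i\lfloor b_i/a_i\rfloor\le b_i$. It remains to check $a_j(n-k)\le b_j$, i.e.\ $n-k\le\lfloor b_j/a_j\rfloor$. If $k=n$ this is clear since $\lfloor b_j/a_j\rfloor\ge 0$; if $k=\lfloor b_i/a_i\rfloor<n$, then $n-k=n-\lfloor b_i/a_i\rfloor\le\lfloor b_j/a_j\rfloor$ by hypothesis, and $n-k\le b_j/a_j$ follows. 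Hence $x_i^{b_i}x_j^{b_j}=x_i^{\,b_i-a_ik}x_j^{\,b_j-a_j(n-k)}\cdot(x_i^{a_i})^k(x_j^{a_j})^{n-k}\in(x_i^{a_i},x_j^{a_j})^n$.

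The only delicate point is the choice of the witness exponent $k$ in the backward direction and the accompanying case split, and even that is entirely elementary; everything else is bookkeeping with the generating set. (If one wishes to allow $a_i$ or $a_j$ to be $-\infty$, the same argument applies verbatim with the conventions $x^{-\infty}=0$ and $1/(-\infty)=0$, since then the corresponding variable simply drops out of both the ideal and the floor condition.)
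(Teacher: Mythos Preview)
Your proof is correct and follows essentially the same approach as the paper: both identify the monomial generators of $(x_i^{a_i},x_j^{a_j})^n$ and reduce membership to the existence of an integer parameter satisfying two linear inequalities, with the forward direction handled identically. The only cosmetic difference is in the backward direction: the paper argues that the interval $[n-b_i/a_i,\,b_j/a_j]$ contains an integer and then clips it into $\{0,\ldots,n\}$, whereas you construct the witness directly as $k=\min\{n,\lfloor b_i/a_i\rfloor\}$; your choice is slightly cleaner but the underlying idea is the same.
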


\begin{proof}
    One can see that $(x_i^{a_i}, x_j^{a_j})^n = (x_i^{na_i}, x_i^{a_i(n-1)}x_j^{a_j}, \ldots, x_i^{a_i}x_j^{a_j(n-1)}, x_j^{na_j})$.  Thus, $x_i^{b_i}x_j^{b_j} \in (x_i^{a_i}, x_j^{a_j})^n$ if and only if there exists $t \in \{0, 1, \ldots, n\}$ such that $b_i \ge a_i(n-t)$ and $b_j \ge {a_j}t$.  We will show the existence of such a $t$ is equivalent to satisfying the inequality.

    First, assume such a $t$ exists.  Then $n - \frac {b_i}{a_i} \le t \le \frac {b_j}{a_j}$.  In particular, $t$ is an integer between $n - \frac {b_i} {a_i}$ and $\frac {b_j} {a_j}$.  Thus, $\lfloor \frac {b_j} {a_j} \rfloor - \lceil n - \frac {b_i} {a_i} \rceil \ge 0$.  Since $-\lceil -u \rceil = \lfloor u \rfloor$ for any $u \in \mathbb R$, this rearranges to $\left\lfloor \frac {b_i} {a_i} \right\rfloor + \left\lfloor \frac {b_j} {a_j} \right\rfloor \ge n$.

    Assume now that $b_i$ and $b_j$ satisfy the inequality.  Equivalently, we have $\lfloor \frac {b_j} {a_j} \rfloor - \lceil n - \frac {b_i} {a_i} \rceil \ge 0$.  This means there is an integer $\ell$, in between $\frac {b_j}{a_j}$ and $n - \frac {b_i} {a_i}$.  If $\ell$ can only be chosen to be negative, then $b_i \ge a_i(n-\ell) \ge a_in$, but since $b_j \ge 0$ by assumption, $\ell' = 0$ also satisfies $b_i \ge a_i(n - \ell')$, $b_j \ge a_j\ell'$.  Thus, $\ell$ can be chosen to be non-negative.  Similarly, if $\ell$ can only be chosen to be greater than $n$, we have $a_j n \le a_j \ell  \le b_j$.  But then if $\ell'' = n$, we get $b_j > a_j \ell''$ and $b_i \ge a_j(n - \ell'')$, so $\ell$ maybe chosen to be, at most, $n$. So we have the existence of a $t$, i.e. $t = \ell$.
\end{proof}
\begin{cor}\label{symfloorcond}  Let $I = \bigcap_{1 \le i < j \le r} (x_i^{a_{ij}}, x_j^{a_{ji}})$, where $a_{ij} > 0$ for all $i, j$.  Then ${\mathbf x}^{\mathbf b} \in I^{(n)}$ if and only if, for all $1 \le i < j \le r$,  $\left\lfloor \frac {b_i} {a_{ij}} \right\rfloor + \left\lfloor \frac {b_j} {a_{ji}} \right\rfloor \ge n$.

\end{cor}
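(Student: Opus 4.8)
The plan is to recognize this as a direct consequence of Lemma \ref{primdecomppowers} and Proposition \ref{floorcond}, once one checks that the displayed intersection is already a primary decomposition with no embedded primes. First I would fix the notation $Q_{ij} := (x_i^{a_{ij}}, x_j^{a_{ji}})$ for $1 \le i < j \le r$ and observe that each $Q_{ij}$ is $(x_i,x_j)$-primary: $R/Q_{ij}$ is a polynomial ring in the remaining variables over the Artinian local ring $k[x_i,x_j]/(x_i^{a_{ij}},x_j^{a_{ji}})$, whose unique associated prime is $(x_i,x_j)$. Hence $I = \bigcap_{i<j} Q_{ij}$ is a monomial primary decomposition in which every associated prime has height $2$; since two distinct such primes $(x_i,x_j)$ and $(x_k,x_\ell)$ are incomparable, none of them is embedded. (When $r = 2$ there is a single component and the statement is literally Proposition \ref{floorcond}; when $\mathfrak m$ happens to be among the $(x_i,x_j)$, e.g. $r=2$, one is in the situation of Example \ref{ex:maxAssoc}, consistent with the claim.)

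Next I would invoke the embedded-prime-free case of Lemma \ref{primdecomppowers}, which gives $I^{(n)} = \bigcap_{i<j} Q_{ij}^n$, so that $\mathbf{x}^{\mathbf b} \in I^{(n)}$ if and only if $\mathbf{x}^{\mathbf b} \in Q_{ij}^n$ for every pair $i<j$. The one step that requires a word of justification is the reduction to two variables: $Q_{ij}^n$ is generated by monomials supported on $\{x_i,x_j\}$ only (explicitly $Q_{ij}^n = (x_i^{na_{ij}}, x_i^{(n-1)a_{ij}}x_j^{a_{ji}}, \ldots, x_j^{na_{ji}})$ as in the proof of Proposition \ref{floorcond}), and divisibility of $\mathbf{x}^{\mathbf b}$ by such a monomial depends only on the exponents $b_i$ and $b_j$. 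Therefore $\mathbf{x}^{\mathbf b} \in Q_{ij}^n$ if and only if $x_i^{b_i}x_j^{b_j} \in (x_i^{a_{ij}}, x_j^{a_{ji}})^n$, and Proposition \ref{floorcond} translates this into $\lfloor b_i/a_{ij}\rfloor + \lfloor b_j/a_{ji}\rfloor \ge n$. Conjoining over all pairs $1 \le i < j \le r$ finishes the proof.

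There is no genuine obstacle here — the statement is a corollary — so the "hardest" points are merely organizational: confirming that the intersection has no embedded primes so that Lemma \ref{primdecomppowers} applies in its clean form $I^{(n)} = \bigcap Q_{ij}^n$, and making explicit that the exponents of the variables other than $x_i,x_j$ are irrelevant to membership in $Q_{ij}^n$, which is what lets Proposition \ref{floorcond} be applied pair by pair.
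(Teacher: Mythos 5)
Your proof is correct and follows the same route as the paper: the paper's proof is simply the two-line invocation of Lemma~\ref{primdecomppowers} (to get $I^{(n)} = \bigcap_{i<j} Q_{ij}^n$) followed by an application of Proposition~\ref{floorcond} to each factor. You supply the justifications the paper leaves implicit (each $Q_{ij}$ is $(x_i,x_j)$-primary, the height-two primes are pairwise incomparable so there are no embedded primes, and membership in $Q_{ij}^n$ depends only on the exponents $b_i,b_j$), all of which are accurate, so this is the same argument spelled out more fully rather than a different one.
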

\begin{proof}
By Lemma \ref{primdecomppowers}, $I^{(n)} = \bigcap_{1 \le i < j \le r} (x_i^{a_{ij}}, x_j^{a_{ji}})^n$.  Apply Proposition \ref{floorcond} to each of the ideals in the intersection.
\end{proof}

Let $m = {\mathbf x}^{\mathbf b}$ be a minimal generator of $I^{(n)}$.  By definition, $m \in I^{(n)}$, but for all $i \in \{1, \ldots, r\}$, $x_i\inv m \not\in I^{(n)}$ or is undefined, depending on if $x_i$ divides $m$.  This entails that, for each $i$, there is at least one $j$ such that $x_i\inv m \not \in (x_i^{a_{ij}}, x_j^{a_{ji}})^n$.  We then use Corollary \ref{symfloorcond} to get the following:

\begin{prop}\label{modcon} Let $I$ be as in Corollary \ref{symfloorcond}.  If $m = {\mathbf x}^{\mathbf b} \in I^{(n)}$ but $x_i\inv m \not \in I^{(n)}$, then, for at least one $j$, $b_i \equiv 0 \pmod {a_{ij}}$.
\end{prop}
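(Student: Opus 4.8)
The plan is to start from the hypothesis and trace through exactly what "minimal generator" means in terms of the floor conditions from Corollary \ref{symfloorcond}. Since $m = {\mathbf x}^{\mathbf b} \in I^{(n)}$, Corollary \ref{symfloorcond} gives $\lfloor b_i/a_{ij}\rfloor + \lfloor b_j/a_{ji}\rfloor \ge n$ for all pairs. Since $x_i^{-1}m = {\mathbf x}^{\mathbf b - \mathbf e_i} \notin I^{(n)}$ (and this is meaningful because minimality forces $b_i \ge 1$, so $x_i$ divides $m$), Corollary \ref{symfloorcond} must fail for at least one pair involving index $i$: there exists $j$ with $\lfloor (b_i-1)/a_{ij}\rfloor + \lfloor b_j/a_{ji}\rfloor < n$, i.e. $\lfloor (b_i-1)/a_{ij}\rfloor + \lfloor b_j/a_{ji}\rfloor \le n-1$.

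Next I would combine the two inequalities for this particular $j$. From the membership we have $\lfloor b_i/a_{ij}\rfloor + \lfloor b_j/a_{ji}\rfloor \ge n$, and from the non-membership $\lfloor (b_i-1)/a_{ij}\rfloor + \lfloor b_j/a_{ji}\rfloor \le n-1$. Subtracting, $\lfloor b_i/a_{ij}\rfloor - \lfloor (b_i-1)/a_{ij}\rfloor \ge 1$. But decreasing the numerator by $1$ can drop the floor by at most $1$, so in fact $\lfloor b_i/a_{ij}\rfloor - \lfloor (b_i-1)/a_{ij}\rfloor = 1$. The elementary fact is that $\lfloor b_i/a_{ij}\rfloor > \lfloor (b_i-1)/a_{ij}\rfloor$ holds if and only if $a_{ij} \mid b_i$: the floor function $\lfloor t/a_{ij}\rfloor$ jumps precisely at multiples of $a_{ij}$. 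Hence $b_i \equiv 0 \pmod{a_{ij}}$ for this $j$, which is exactly the claim.

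The only slightly delicate point is the justification that $b_i \ge 1$, so that $x_i^{-1}m$ is a genuine monomial and the hypothesis "$x_i^{-1}m \notin I^{(n)}$" is the right statement to use rather than "$x_i^{-1}m$ is undefined." This follows because $m$ is assumed to be a \emph{minimal generator} of $I^{(n)}$, and by the discussion preceding the proposition, minimality of $m$ means $x_i^{-1}m \notin I^{(n)}$ or $x_i \nmid m$ for every $i$; but if we are in the situation where we have concluded $x_i^{-1}m \notin I^{(n)}$ (the hypothesis of the proposition), this presupposes $x_i \mid m$. So there is essentially no obstacle here; the proof is a short computation. I would write it in three sentences: invoke Corollary \ref{symfloorcond} for $\mathbf b$ and for $\mathbf b - \mathbf e_i$, subtract to force the floor to jump, and cite the divisibility characterization of where $\lfloor t/a_{ij}\rfloor$ increases.

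\begin{proof}
Since $x_i^{-1}m = {\mathbf x}^{\mathbf b - \mathbf e_i} \notin I^{(n)}$, in particular $x_i$ divides $m$, so $b_i \ge 1$ and $\mathbf b - \mathbf e_i \in \Z_{\ge 0}^r$. By Corollary \ref{symfloorcond} applied to $m \in I^{(n)}$, we have $\lfloor b_i/a_{ij}\rfloor + \lfloor b_j/a_{ji}\rfloor \ge n$ for all $j \ne i$. On the other hand, Corollary \ref{symfloorcond} applied to $x_i^{-1}m \notin I^{(n)}$ yields an index $j$ with
\[
    \left\lfloor \frac{b_i - 1}{a_{ij}} \right\rfloor + \left\lfloor \frac{b_j}{a_{ji}} \right\rfloor \le n - 1.
\]
Subtracting the second inequality from the first gives $\lfloor b_i/a_{ij}\rfloor - \lfloor (b_i-1)/a_{ij}\rfloor \ge 1$. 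But $t \mapsto \lfloor t/a_{ij}\rfloor$ is nondecreasing on integers and increases when the argument passes from $b_i - 1$ to $b_i$ if and only if $a_{ij} \mid b_i$. Hence $b_i \equiv 0 \pmod{a_{ij}}$.
\end{proof}
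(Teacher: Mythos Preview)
Your proof is correct and follows essentially the same approach as the paper's: apply Corollary \ref{symfloorcond} to both $m$ and $x_i^{-1}m$, compare the two inequalities at the index $j$ where the condition fails, and conclude that the floor $\lfloor b_i/a_{ij}\rfloor$ must jump, which forces $a_{ij}\mid b_i$. The only minor quibble is the sentence ``in particular $x_i$ divides $m$'': non-membership of $x_i^{-1}m$ in $I^{(n)}$ does not by itself force $b_i\ge 1$, but this is harmless since if $b_i=0$ the conclusion $b_i\equiv 0\pmod{a_{ij}}$ holds trivially.
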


\begin{proof} By Corollary \ref{symfloorcond}, we have that $\left\lfloor \frac {b_i} {a_{ij}} \right\rfloor + \left\lfloor \frac {b_j} {a_{ji}} \right\rfloor \ge n$ for all $j \ne i$, but, there is a $j'$ such that $\left\lfloor \frac {b_i - 1} {a_{ij'}} \right\rfloor + \left\lfloor \frac {b_{j'}} {a_{j'i}} \right\rfloor < n$.  It follows that $\left\lfloor \frac {b_i-1} {a_{ij'}} \right\rfloor < \left\lfloor \frac {b_i} {a_{ij'}} \right\rfloor$, which can only occur if $\frac {b_i}{a_{ij'}}$ is an integer.
\end{proof}

We now have the following:

\begin{prop} Let $I$ be as in Corollary \ref{symfloorcond}.  If $m = {\mathbf x}^{\mathbf b}$ is a minimal generator of $I^{(n)}$, then, for each $1 \le i \le r$, there is some $j \ne i$ such that in the first orthant of $\mathbb R^r$, $\mathbf{b}$ lies on one of finitely many hyperplanes parallel to $\frac {u_i}{a_{ij}} + \frac {u_j}{a_{ij}} = n$.  Moreover, the number of possible planes on which $\mathbf b$ may lie is constant.
    
\end{prop}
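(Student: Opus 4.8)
The plan is to strengthen Proposition~\ref{modcon}: instead of merely the divisibility $a_{ij}\mid b_i$, I would extract from minimality of $\mathbf x^{\mathbf b}$ that the governing floor inequality becomes an \emph{equality}, and then read that equality as the statement that $\mathbf b$ lies on a hyperplane parallel to $H_{ij}\colon\tfrac{u_i}{a_{ij}}+\tfrac{u_j}{a_{ji}}=n$ whose constant term is pinned to a finite, $n$-independent set.

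First I would fix an index $i$ with $x_i\mid m$ and rerun the proof of Proposition~\ref{modcon}. Since $m$ is a minimal generator, $x_i^{-1}m\notin I^{(n)}$, so by Corollary~\ref{symfloorcond} there is $j=j(i)\ne i$ with $\bigl\lfloor\tfrac{b_i-1}{a_{ij}}\bigr\rfloor+\bigl\lfloor\tfrac{b_j}{a_{ji}}\bigr\rfloor<n\le\bigl\lfloor\tfrac{b_i}{a_{ij}}\bigr\rfloor+\bigl\lfloor\tfrac{b_j}{a_{ji}}\bigr\rfloor$, which as in that proof forces $a_{ij}\mid b_i$. The new point is that the two displayed inequalities now confine $\bigl\lfloor\tfrac{b_j}{a_{ji}}\bigr\rfloor$ to the half-open interval $\bigl[\,n-\tfrac{b_i}{a_{ij}},\ n-\tfrac{b_i}{a_{ij}}+1\,\bigr)$, whose only integer is $n-\tfrac{b_i}{a_{ij}}$ (an integer, since $a_{ij}\mid b_i$); hence $\bigl\lfloor\tfrac{b_j}{a_{ji}}\bigr\rfloor=n-\tfrac{b_i}{a_{ij}}$. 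Setting $c_{ij}:=\tfrac{b_i}{a_{ij}}+\tfrac{b_j}{a_{ji}}$ and using that the fractional part of $\tfrac{b_j}{a_{ji}}$ lies in $[0,1)$, we get $c_{ij}\in[n,n+1)$; so $\mathbf b$ satisfies $\tfrac{u_i}{a_{ij}}+\tfrac{u_j}{a_{ji}}=c_{ij}$, a hyperplane parallel to $H_{ij}$.

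For the finiteness and the ``constant'' claim I would use that $\mathbf b\in\Z^r$ forces $c_{ij}\in\tfrac1{a_{ij}}\Z+\tfrac1{a_{ji}}\Z=\tfrac1{\operatorname{lcm}(a_{ij},a_{ji})}\Z$ by B\'ezout's identity. Since $1$ belongs to this group, the admissible values of $c_{ij}$ inside $[n,n+1)$ number exactly $\operatorname{lcm}(a_{ij},a_{ji})$, a quantity independent of $n$. Thus, for the fixed index $i$, a minimal generator of $I^{(n)}$ lies on one of at most $\sum_{j\ne i}\operatorname{lcm}(a_{ij},a_{ji})$ hyperplanes parallel to the $H_{ij}$, and this bound depends only on the exponents $a_{ij}$, as claimed.

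The step I expect to be the main obstacle is the boundary case $x_i\nmid m$, i.e. $b_i=0$: then $x_i^{-1}m$ is not a monomial, minimality gives no information in the $u_i$-direction, and the argument above does not apply. One must handle this separately --- e.g. by observing that $\mathbf b$ then lies on the coordinate facet $u_i=0$ of $n\,\SP(I)$, regarding the constraint $b_i=0$ as the degenerate instance $b_j=c\,a_{ji}$ of the equation $\tfrac{u_i}{a_{ij}}+\tfrac{u_j}{a_{ji}}=c$, and then showing that the relevant values of $c$ remain confined to a set whose size does not grow with $n$. Reconciling this degenerate case with the counting above, so that the total number of hyperplanes stays bounded uniformly in $n$, is the delicate part of the write-up.
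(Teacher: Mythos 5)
Your argument for the case $x_i\mid m$ is essentially the paper's: fix $i$, use minimality to produce $j$ with $\lfloor(b_i-1)/a_{ij}\rfloor+\lfloor b_j/a_{ji}\rfloor<n$, deduce $a_{ij}\mid b_i$ via Proposition~\ref{modcon}, pin $\lfloor b_j/a_{ji}\rfloor=n-b_i/a_{ij}$, and conclude $c_{ij}=b_i/a_{ij}+b_j/a_{ji}\in[n,n+1)$. One small inefficiency in your count: once you know $b_i/a_{ij}\in\Z$, $c_{ij}\in\Z+\tfrac1{a_{ji}}\Z=\tfrac1{a_{ji}}\Z$, so there are only $a_{ji}$ admissible translates in $[n,n+1)$ (the bound the paper records); your B\'ezout/$\operatorname{lcm}(a_{ij},a_{ji})$ count is correct but throws away the integrality you had just established. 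Either way the bound is independent of $n$, so this does not affect the conclusion.

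Your worry about $b_i=0$ is legitimate, and it is a genuine gap in the paper's proof as well: the paper's opening move, ``choose $j$ such that $\lfloor(b_i-1)/a_{ij}\rfloor+\lfloor b_j/a_{ji}\rfloor<n$,'' is justified only when $x_i\mid m$. When $b_i=0$ such a $j$ need not exist, and in the stated generality the conclusion can in fact fail. For instance take $r=3$, $a_{12}=a_{21}=3$ and all other $a_{k\ell}=1$, so $I=(x_1^3,x_2^3)\cap(x_1,x_3)\cap(x_2,x_3)$; for $n=6m$ the monomial $x_1^{9m}x_2^{9m}$ is a minimal generator of $I^{(n)}$ (and survives in $I^{(n)}/I^n$), yet for $i=3$ both values $\tfrac{b_3}{a_{3j}}+\tfrac{b_j}{a_{j3}}$ equal $9m=n+\tfrac n2$, so the offset from $n$ grows linearly. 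The statement is harmless for the application at hand because in the Section~6 family $(x^a,y)\cap(y^b,z)\cap(z^c,x)$ the $b_i=0$ generators are shown to land inside $n\,\NP(I)$ (see the $v=0$ analysis in the proof of Proposition~\ref{prop:facegens}), so Propositions~\ref{prop:facegens} and~\ref{prop:nonFaceGen} only ever use this lemma when $b_i>0$. A clean fix is to state the proposition with the additional hypothesis $x_i\mid m$ (equivalently, restrict to indices $i$ with $b_i>0$), which is all that is ever used.
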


\begin{proof} Fix $i \in \{1, \ldots, r\}$.  Since $m$ is minimal, choose a $j$ such that $\left\lfloor \frac {b_i - 1} {a_{ij}} \right\rfloor + \left\lfloor \frac {b_{j}} {a_{ji}} \right\rfloor < n$.  By Corollary \ref{symfloorcond} and Proposition \ref{modcon}, it follows that $n - \ell \le \left\lfloor \frac {b_{j}} {a_{ji}} \right\rfloor < n - \ell + 1$, where $b_i = a_{ij}\ell$.  Thus, the value of $b_j$ has only finitely many possibilities, namely $a_{ji}$ possibilities, and thus only finitely many planes parallel to $\frac {u_i}{a_{ij}} + \frac {u_j}{a_{ij}} = n$ can contain a minimal generator.    
\end{proof}

\begin{rem}\label{rem:doubleMod0} Note that, using the above notation, if $b_i \equiv 0 \pmod {a_{ij}}$ and $b_j \equiv 0 \pmod {a_{ji}}$, then we get $\frac 1{a_{ij}}(a_{ij}\ell_1) + \frac 1{a_{ji}} (a_{ji}\ell_2) = \ell_1 + \ell_2 \ge n$.  By minimal generation, we get that $\ell_1 + \ell_2 - \frac 1{a_{ij}} < n$ or $\ell_1 + \ell_2 - \frac 1{a_{ji}} < n$.  But since $\frac 1{a_{ij}}, \frac 1{a_{ji}} \le 1$, it follows that $\ell_1 + \ell_2 = n$.  Thus, this minimal generator lies on the face $\frac {u_i}{a_{ij}} + \frac {u_j}{a_{ij}} = n$.
    
\end{rem}

We are now ready to calculate $\sdef_I(n)$ for our family.  We have the following formulation:

\begin{prop}\label{prop:facegens} Let $I = (x^a, y) \cap (y^b, z) \cap (z^c, x) = (x^az, y^bx, z^cy, xyz)$.  If $m = x^uy^vz^w$ is a minimal generator of $I^{(n)}/I^n$, then:
\begin{enumerate}
    \item If $\frac 1a u + v = n$, $m = x^{an-av}y^vz^{\max\{n - \left\lfloor \frac v b \right\rfloor, cn + cav - can\}}$ for some $1 \le v \le n -1$,
    \smallskip
    
    \item If $\frac 1b v + w = n$, $m = x^{\max\{n - \left\lfloor \frac w c \right\rfloor, an - abn + abw\}}y^{bn - bw} z^w$ for some $1 \le w \le n - 1$,
    \smallskip
    
    \item If $\frac 1c w + u = n$, $m = x^uy^{\max\{n - \left\lfloor \frac u a \right\rfloor, bn - bcn + bcu\}}z^{cn - cu}$ for some $1 \le u \le n - 1$.
\end{enumerate}
    
\end{prop}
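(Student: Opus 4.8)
The plan is to reduce all three cases to case (1) using the cyclic symmetry of the family, and then to prove (1) directly from the floor–inequality description of $I^{(n)}$. For the reduction, note that the $k$-algebra automorphism $\tau$ of $k[x,y,z]$ with $x\mapsto z,\ y\mapsto x,\ z\mapsto y$ sends $(x^a,y)\cap(y^b,z)\cap(z^c,x)$ to $(x^b,y)\cap(y^c,z)\cap(z^a,x)$ and sends the monomial $x^uy^vz^w$ to $x^vy^wz^u$; thus it matches minimal generators of $I^{(n)}/I^n$ for the triple $(a,b,c)$ bijectively with those for $(b,c,a)$, and under the relabeling $(u,v,w)\mapsto(v,w,u)$ it turns the hypothesis of case (2) into that of case (1), and case (3) into case (2), with the asserted formulas matching up accordingly. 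Hence it suffices to prove (1) for every $(a,b,c)$; cases (2) and (3) then follow by applying $\tau$ once and twice.

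To prove (1), I would first record the combinatorial description of $I^{(n)}$: writing the three primary components of $I$ as the indexed pairs $(x_1^a,x_2^1),\ (x_2^b,x_3^1),\ (x_1^1,x_3^c)$, Corollary \ref{symfloorcond} gives that $x^uy^vz^w\in I^{(n)}$ if and only if
\[
\text{(A) } \lfloor u/a\rfloor+v\ge n,\qquad \text{(B) } \lfloor v/b\rfloor+w\ge n,\qquad \text{(C) } u+\lfloor w/c\rfloor\ge n.
\]
I will also use two elementary remarks: a minimal generator $m$ of $M=I^{(n)}/I^n$ is in particular a minimal generator of $I^{(n)}$ with $m\notin I^n$; and since $x^az,\ y^bx,\ z^cy$ are among the generators of $I$, the monomials $x^{an}z^n$, $x^ny^{bn}$, $y^nz^{cn}$ lie in $I^n$. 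Now let $m=x^uy^vz^w$ be a minimal generator of $M$ with $\tfrac1a u+v=n$; then $a\mid u$ and $u=a(n-v)$, so $v\le n$. The boundary values are impossible: if $v=n$ then $u=0$, and (C) forces $\lfloor w/c\rfloor\ge n$, so $y^nz^{cn}\mid m$ and $m\in I^n$; if $v=0$ then $u=an$, and (B) forces $w\ge n$, so $x^{an}z^n\mid m$ and $m\in I^n$; both contradict $m\notin I^n$. This gives $1\le v\le n-1$ and $u=an-av$.

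It remains to pin down $w$. Set $W=\max\bigl\{\,n-\lfloor v/b\rfloor,\ c(n-u)\,\bigr\}$, where $c(n-u)=cn+cav-can$. From (B) I get $w\ge n-\lfloor v/b\rfloor$, and from (C) I get $\lfloor w/c\rfloor\ge n-u$, hence $w\ge c\lfloor w/c\rfloor\ge c(n-u)$; so $w\ge W$, and also $W\ge n-\lfloor v/b\rfloor\ge n-(n-1)=1$. To see $w=W$, suppose $w\ge W+1$ and check that $m/z=x^uy^vz^{w-1}$ still lies in $I^{(n)}$: (A) holds since $\lfloor u/a\rfloor+v=(n-v)+v=n$; (B) holds since $\lfloor v/b\rfloor+(w-1)\ge\lfloor v/b\rfloor+W\ge n$; and (C) holds since $w-1\ge W\ge c(n-u)$ yields $\lfloor(w-1)/c\rfloor\ge n-u$ (immediate if $n-u<0$, and because $n-u\in\Z$ otherwise). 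Since $w\ge W+1\ge2$, the variable $z$ divides $m$, so $m\in\mathfrak m I^{(n)}$, contradicting the minimality of $m$ as a generator of $I^{(n)}$. Hence $w=W=\max\{\,n-\lfloor v/b\rfloor,\ cn+cav-can\,\}$, which is exactly case (1).

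I do not expect a genuine obstacle; the whole argument is of bounded depth. The points that need care are bookkeeping rather than ideas: getting the direction of the symmetry correspondence exactly right, matching the three primary components of $I$ to the indexed pairs of Corollary \ref{symfloorcond} in the correct order, and treating the small sub-cases ($v\in\{0,n\}$ and the sign of $n-u$) in the minimality step. The one substantive point is that on the facet $\tfrac1a u+v=n$ a minimal generator is forced to carry the smallest admissible $z$-exponent, which is precisely what the computation with $m/z$ proves.
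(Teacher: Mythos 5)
Your proof is correct and relies on the same core ingredients as the paper's (the floor/polyhedral characterization of membership in $I^{(n)}$, elimination of the boundary values $v\in\{0,n\}$, and forcing $w$ to its minimum), but it is organized in the opposite logical direction and is slightly more self-contained. The paper constructs the candidate $m_v=x^{an-av}y^v z^{W}$, verifies that it lies in $n\,\SP(I)\setminus n\,\NP(I)$ (so implicitly invokes Lemma \ref{lem:intClFam} to transfer this to $I^{(n)}\setminus I^n$), checks $x^{-1}m_v,y^{-1}m_v,z^{-1}m_v\notin I^{(n)}$, and only then concludes that no other monomial on the face with the same $v$ can be a minimal generator. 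You instead start from an arbitrary minimal generator $m$ and deduce its form directly from minimality, using Corollary \ref{symfloorcond} rather than the linear inequalities of the symbolic polyhedron, which sidesteps the implicit dependence on the integral-closure equalities of Lemma \ref{lem:intClFam}. You also spell out the cyclic-symmetry reduction of cases (2) and (3) to case (1), which the paper leaves as ``analogous.'' One point worth noting for downstream use: your argument proves precisely the stated implication (a minimal generator lying on the face $\tfrac1a u+v=n$ must equal $m_v$) but does not show that each $m_v$ with $1\le v\le n-1$ actually \emph{is} a minimal generator of $I^{(n)}/I^n$. The paper's construct-and-verify version does establish this, and that converse is quietly used when counting $\#M(n)$ in Proposition \ref{prop:faceGensGrowth}; if you plan to carry your argument forward to that proposition you would need to add it.
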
 
\begin{proof} We will just prove (1), as (2) and (3) are analogous.  So assume that $u = an - av$.

The defining inequalities for $n\NP(I)$ is as follows:
\[
    n\NP(I) = \begin{cases} u + (a-1)v + (a-1)(b-1)w \ge (ab - b + 1)n\\
    (c-1)u + (a-1)(c-1)v + w \ge (ac - a + 1)n\\
    (b-1)(c-1)u + v + (b-1)w \ge (bc - c + 1)n\\
    \frac1a u + v \ge n\\
    \frac 1b v + w \ge n\\
    \frac 1c w + u \ge n\\
    u, v, w \ge 0
    \end{cases}.
\]
The defining inequalities for $n\SP(I)$ are the latter four equations defining $n\NP(I)$.

From the fourth and seventh inequalities, we see $0 \le v \le n$.  If $v = 0$, then $u = an$ and $w \ge n$.  But this means that all of the first three inequalities are satisfied, so $m \in n\NP(I)$.  So, if $v = 0$, $m$ cannot be a minimal generator in $n\SP(I) \setminus n\NP(I)$.  For similar reasons, no minimal generator can have $v = n$.

To see that $m \in n \SP(I)$, observe:

\begin{align*}
    \frac1a(an - av) + v & = n\\
    \frac1b v + \max\{ n - \left\lfloor \frac v b \right\rfloor, cn + cav - can \} & \ge \frac 1b v + n -  \left\lfloor \frac v b \right\rfloor \ge n,\\
    \frac1c \max\{ n - \left\lfloor \frac v b \right\rfloor, cn + cav - can \} + an - av & \ge \frac 1c (cn + cav - can) + an - av  = n.\\
\end{align*}
Now, to see $m \not \in n \NP(I)$, we consider it by cases.  
If the $w$-coordinate of $m$ is $n - \left \lfloor \frac v b \right \rfloor$, $w \le n$.  Thus, with the first inequality, we see
\[
    an - av + (a - 1)v + (a-1)(b-1)w \le an - av + (a-1)v + (a-1)(b-1)n = (ab - b + 1)n - v < (ab - b + 1)n.
\]

If $w = cn + cav - can$, since $v < n$, considering the third inequality yields:
\[
    a(b-1)(v-n) + bcn - cn + v \le bcn - cn + v < (bc - c + 1)n.
\]

Now, note $\frac 1a (an - av - 1) + v = n - v - \frac 1a + v = n - \frac 1a < n$, so $x\inv m \not\in I^{(n)}/I^n$.  Also, $\frac 1a(an - av) + v - 1 = n - v + v - 1 = n - 1 < n$, so $y\inv m \not \in I^{(n)}/I^n$.  Finally, to see $z \inv m \not \in I^{(n)} / I^n$, we take it in cases.  If $n - \left\lfloor \frac v b \right\rfloor \le cn + cav - can$,
\[
    \frac 1c (cn + cav - can - 1) + an - av = n - \frac 1c < n.
\]
If $n - \left\lfloor \frac v b \right\rfloor \ge cn + cav - can$,
\[
    \frac vb + n - \left\lfloor \frac v b \right\rfloor - 1 < n + 1 - 1 = n.
\]

Since all monomials of this form are minimal generators, all other monomials with first two exponents $an - av$ and $v$ will have a final exponent, at least, $\max\{n - \left\lfloor \frac v b \right\rfloor, cn + cav - can\}$, so there are no other possible minimal generators residing on the face, $\frac1au + v = n$.
\end{proof}

We have now established a (potentially strict) subset of the generators of $I^{(n)}/I^n$.  Let $M(n)$ be this set of generators.  However, even if this is a strict subset, we have the following:

\begin{prop}\label{prop:nonFaceGen} Let $I$ be as in Proposition \ref{prop:facegens}. Suppose $M(n)$ is a strict subset of the minimal generators of $I^{(n)}/I^n$.  That is, suppose there is a minimal generator not residing on a face.  Then this minimal generator must have the exponent vector, $R_n = (\lceil n \alpha \rceil, \lceil n \beta \rceil, \lceil n \gamma \rceil)$. That is, the minimal generator must be $x^{\lceil n \alpha \rceil}y^{\lceil n \beta \rceil} z^{\lceil n \gamma \rceil}$.
\end{prop}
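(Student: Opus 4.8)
The plan is to show that the exponent vector $(u,v,w)$ of any off-face minimal generator $m = x^uy^vz^w$ of $M = I^{(n)}/I^n$ is forced to be the unique lattice point of the half-open unit box anchored at $nP = (n\alpha,n\beta,n\gamma)$, namely $(\lceil n\alpha\rceil, \lceil n\beta\rceil, \lceil n\gamma\rceil)$. First I would dispose of boundary cases: if one coordinate, say $w$, is zero, then $m \in I^{(n)}$ together with Corollary~\ref{symfloorcond} forces $v \ge bn$ and $u \ge n$, so $(u,v,0)$ dominates $n(1,b,0)$; since $xy^b$ is a generator of $I$ this places $(u,v,0)$ in $\NP(I^n) = \NP(\overline{I^n})$, hence $m \in \overline{I^n} = I^n$ by Lemma~\ref{lem:intClFam}, contradicting that $m$ represents a nonzero class in $M$. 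So $u,v,w \ge 1$ and all of $x^{-1}m, y^{-1}m, z^{-1}m$ are defined.

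Next I would extract a system of floor equations. Minimality means each $x_i^{-1}m$ lies outside $I^{(n)}$, so by Corollary~\ref{symfloorcond} one of the three governing floor inequalities dips below $n$. For $x^{-1}m$ the $\{y,z\}$-inequality is untouched (and holds for $m$), and if the $\{x,y\}$-inequality is to blame then $a \mid u$, which — via the equality $\lfloor u/a\rfloor + v = n$ that then follows — places $m$ on the face $\tfrac1a u + v = n$, contrary to hypothesis; hence it is the $\{z,x\}$-inequality, yielding $u + \lfloor w/c\rfloor = n$. Running this argument cyclically produces $\lfloor u/a\rfloor + v = n$, $\lfloor v/b\rfloor + w = n$, $u + \lfloor w/c\rfloor = n$, and simultaneously the ``off every face'' hypothesis translates exactly into $a\nmid u$, $b\nmid v$, $c\nmid w$.

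Finally, writing $i = u \bmod a$, $j = v \bmod b$, $k = w \bmod c$ (so $1 \le i \le a-1$, $1 \le j \le b-1$, $1 \le k \le c-1$ by the non-divisibility), the floor equations clear to the honest linear system $u + av = an+i$, $v + bw = bn+j$, $cu + w = cn+k$, whose matrix has determinant $abc+1$; Cramer's rule gives $u = n\alpha + \tfrac{i - aj + abk}{abc+1}$ and cyclic formulas for $v,w$ (the terms $na(bc-b+1)$, etc., are precisely what produce $n\alpha, n\beta, n\gamma$). The proof then closes with an elementary estimate: from $1 \le i \le a-1$, $1 \le j \le b-1$, $1 \le k \le c-1$ one gets $a+1 \le i - aj + abk \le abc - ab - 1 < abc+1$, so $0 < u - n\alpha < 1$, and similarly $0 < v - n\beta < 1$ and $0 < w - n\gamma < 1$; since $u,v,w \in \Z$ this forces $u = \lceil n\alpha\rceil$, $v = \lceil n\beta\rceil$, $w = \lceil n\gamma\rceil$.

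I expect the delicate step to be the second one — correctly tracking, for each $x_i^{-1}m$, which of the three floor inequalities is permitted to fail, and recognizing that the ``off every face'' condition is exactly what kills the divisibility alternatives and hands over a clean linear system. After that the argument is mechanical; in particular I would not need the non-face defining inequalities of $n\NP(I)$ recorded in Proposition~\ref{prop:facegens}, nor any genuine number theory about $\lceil n\alpha\rceil$ modulo $a$ — the bounds $i,j,k \ge 1$ alone squeeze $(u,v,w)$ into the unit box at $nP$, which also yields uniqueness of such a generator for free.
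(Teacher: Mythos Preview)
Your argument is correct, and it takes a genuinely different route from the paper's. Both proofs arrive at the same three floor equations $\lfloor u/a\rfloor + v = n$, $\lfloor v/b\rfloor + w = n$, $\lfloor w/c\rfloor + u = n$ together with the non-divisibility conditions $a\nmid u$, $b\nmid v$, $c\nmid w$; your derivation of these via the case analysis on which inequality fails for each $x_i^{-1}m$ is clean and matches what the paper does (the paper packages the non-divisibility step as Remark~\ref{rem:doubleMod0}). The divergence is in how the endgame is handled. The paper does \emph{not} solve the system: instead it fixes $v$, invokes the type~(1) face generator $x^{an-av}y^v z^{\max\{\cdots\}}$ already produced in Proposition~\ref{prop:facegens}, and argues that unless $v \ge \lceil n\beta\rceil$ the two generators would share two coordinates and hence coincide, contradicting ``off-face''. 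This yields only one-sided bounds $u\ge\lceil n\alpha\rceil$, $v\ge\lceil n\beta\rceil$, $w\ge\lceil n\gamma\rceil$; equality is then forced by observing that $R_n$ itself lies in $n\SP(I)$ (it dominates the vertex $nP$), hence in $I^{(n)}$ by Lemma~\ref{lem:intClFam} and Theorem~\ref{thm:SPIcont}, so minimality of $m$ prevents strict domination.

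Your approach instead clears the floors to an honest $3\times 3$ linear system and reads off $u = n\alpha + (i - aj + abk)/(abc+1)$ by Cramer, then squeezes the numerator into $(0,\,abc+1)$ using only $1\le i\le a-1$, $1\le j\le b-1$, $1\le k\le c-1$. This is more self-contained: it never calls on Proposition~\ref{prop:facegens}, never needs to check that $R_n\in n\SP(I)$, and produces the two-sided bound (hence uniqueness) in one stroke. The paper's route is a bit more geometric and reuses the face-generator classification already in hand; yours is purely arithmetic and would survive even if Proposition~\ref{prop:facegens} were removed. Your extra care with the boundary case $u,v,w\ge 1$ is also a small improvement in rigor --- the paper asserts ``It is evident that $1\le v\le n$'' without comment.
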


\begin{proof}

Suppose $(u, v, w)$ are the coordinates of a minimal generator of $I^{(n)}/I^n$ that does not reside on any face.  By Remark \ref{rem:doubleMod0}, $a$ does not divide $u$, $b$ does not divide $v$, and $c$ does not divide $w$.  It follows that $n < \frac 1au + v < n + 1$, $n < \frac 1b v + w < n + 1$, $n < \frac 1c w + u < n + 1$.  Thus, we have $v = n - \left\lfloor \frac ua \right\rfloor$, $w = n - \left\lfloor \frac v b \right\rfloor$, $u = n - \left\lfloor \frac w c \right\rfloor$.

Let's focus on the possible values of $v$.  It is evident that $1 \le v \le n$.  If $1 \le v \le n - 1$, then $x^{an - av}y^vz^{\max\{n - \left\lfloor \frac v b \right\rfloor, cn + cav - can\}}$ has already been shown to be a generator.  If $n - \left\lfloor \frac v b \right\rfloor \ge cn + cav - can$, then we have two generators with two identical coordinates, so they must be the same generator.  Otherwise, $n - \lfloor \frac v b \rfloor < cn + cav - can$.  With some rearranging, we get $\left\lfloor \frac{(abc + 1)v}b\right\rfloor > (ac - c + 1)n$.  It follows that $\lceil n \beta \rceil \le v \le n$.  By analogous arguments, we have $\lceil n \alpha \rceil \le u \le n$, and $\lceil n \gamma \rceil \le w \le n$.  It follows that $(u, v, w)$ is generated by $R_n \in n\SP(I)$, and thus, by minimality, must be equal to $R_n$.
\end{proof}

\begin{rem} The point $R_n$ above may not be a minimal point of $n\SP(I)$.  For example, if $a = 2$, $b = 3$, and $c = 4$, $R_4 = (4, 3, 4)$, which is generated by $(3, 3, 4) \in n\SP(I)$.   Furthermore, $R_n$ itself may not be in $n\SP(I) \setminus n\NP(I)$.  It also may reside on a face.  However, the assumptions of Proposition \ref{prop:nonFaceGen} specify that a minimal point not on the faces exists, and when this is the case, this generator is $R_n$.
\end{rem}

\begin{prop} \label{prop:faceGensGrowth} Let $I$ be as in Proposition \ref{prop:facegens}, and $M(n)$ be the set of generators described in Proposition \ref{prop:facegens}.  Then $\#M(n) \sim (\alpha + \beta + \gamma)n$.
    
\end{prop}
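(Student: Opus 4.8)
The plan is to count the generators in $M(n)$ directly by summing over the three families described in Proposition~\ref{prop:facegens}, exploiting the fact that each family is parametrized by a single integer ($v$, $w$, or $u$ respectively) ranging over $\{1,\dots,n-1\}$, and then accounting for overlaps between families. First I would handle family (1): for each $v\in\{1,\dots,n-1\}$ there is exactly one candidate generator $m_v = x^{an-av}y^v z^{\max\{n-\lfloor v/b\rfloor,\, cn+cav-can\}}$, so naively family (1) contributes $n-1$ elements; similarly families (2) and (3) each contribute $n-1$. Thus $\#M(n)$ is $3(n-1)$ minus the number of monomials counted in more than one family. By symmetry of the construction it suffices to analyze, say, the intersection of family (1) and family (3): a monomial lies in both precisely when it sits on the edge $\{\frac1a u+v=n\}\cap\{\frac1c w+u=n\}$ of $n\SP(I)$, and I would show the lattice points on this edge (excluding the shared vertex $P$, which is generically not a lattice point) number asymptotically like a constant times $n$ — in fact, parametrizing the edge linearly, the number of lattice points on it is $\Theta(n)$ with an explicit leading coefficient.

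The key computation is therefore: (a) confirm that within each family the parametrization is genuinely injective on $\{1,\dots,n-1\}$ (two different values of $v$ give genuinely different monomials, which is clear since the $x$-exponent $an-av$ is strictly decreasing in $v$), so each family contributes $n - O(1)$; and (b) compute the asymptotic count of lattice points on each of the three edges of $n\SP(I)$ that bound two adjacent pentagonal faces, since these are exactly the monomials double-counted. For (b), an edge is the intersection of two of the planes $\frac1au+v=n$, $\frac1bv+w=n$, $\frac1cw+u=n$; solving two of these gives a line in $\mathbb{R}^3$ running from the vertex $P=(\alpha n,\beta n,\gamma n)$ out to where it meets a coordinate plane, and the number of integer points on the segment scales linearly in $n$ with a rate determined by the primitive direction vector of that line. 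Summing the three pairwise-edge contributions and subtracting (with care about the triple overlap at $P$, which contributes at most a bounded number of lattice points over all $n$) yields $\#M(n) = 3n - (\text{sum of edge rates})\,n + O(1)$. I would then verify by direct arithmetic that $3 - (\text{sum of edge rates}) = \alpha+\beta+\gamma$, using the explicit formulas $\alpha = \frac{a(bc-b+1)}{abc+1}$, etc., and the fact that the three defining planes of $\SP(I)$ have normals $(\frac1a,1,0)$, $(0,\frac1b,1)$, $(\frac1c,0,1)$; this identity is where the coefficients $\alpha,\beta,\gamma$ re-enter.

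Alternatively — and this may be cleaner — I would note that $M(n)$ is exactly the set of lattice points on the $1$-skeleton (edges) of the region $n\SP(I)\setminus n\NP(I)$ lying on the three "outer" faces, mirroring the logic of Proposition~\ref{edgegens} and its proof in Section~5. The number of lattice points on a polytope's edges, as the polytope is dilated by $n$, is a quasi-polynomial of degree $1$ whose leading coefficient is the sum over edges of $\ell(e)$, where $\ell(e)$ is the lattice length of the edge $e$ (the Euclidean length divided by the length of the primitive vector in $e$'s direction). So $\#M(n)\sim n\sum_e \ell(e)$ over the three relevant edges of $\SP(I)\setminus\NP(I)$, and the task reduces purely to computing those three lattice lengths from the coordinates of $P$ and the points where the edges exit into the coordinate hyperplanes, then checking the sum equals $\alpha+\beta+\gamma$.

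The main obstacle I anticipate is the bookkeeping in (b): correctly identifying the endpoints of each edge of $n\SP(I)\setminus n\NP(I)$ (one endpoint is $P$, but the other is where the edge hits a bounding hyperplane of $n\NP(I)$ or a coordinate plane, and this requires the explicit inequalities for $n\NP(I)$ listed in the proof of Proposition~\ref{prop:facegens}), and then extracting the primitive lattice direction so that the lattice-length count is exact up to $O(1)$. The floor functions in the $z$-exponent of $m_v$ (the $\max\{n-\lfloor v/b\rfloor,\dots\}$ term) introduce a possible $O(1)$ wobble that is harmless for the $\sim$ statement but must be acknowledged. Once the three lattice lengths are in hand, the final identity $\sum_e\ell(e) = \alpha+\beta+\gamma$ should fall out of a short manipulation with the denominator $abc+1$.
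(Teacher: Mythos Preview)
Your primary inclusion--exclusion strategy is precisely the paper's proof. The paper counts each of the three families as $n-1$, then determines the overlap of types~(1) and~(3) directly from the parametrization in Proposition~\ref{prop:facegens}: a type~(1) generator at parameter $v$ coincides with a type~(3) generator exactly when the max in its $z$-exponent is attained by $cn+cav-can$, which reduces to $v\ge n\beta$ and hence yields $(1-\beta)n$ overlaps; the other two pairwise overlaps are dispatched by the cyclic symmetry, and $3n-\bigl((1-\alpha)+(1-\beta)+(1-\gamma)\bigr)n=(\alpha+\beta+\gamma)n$. Your geometric phrasing of each pairwise overlap as the lattice points on an edge of $n\SP(I)$ emanating from $nP$ is equivalent, and the paper's direct use of Proposition~\ref{prop:facegens} in fact dissolves the ``endpoint'' obstacle you anticipate: since that proposition already confines $v$ to $\{1,\dots,n-1\}$ and certifies membership in $I^{(n)}\setminus I^n$, no separate analysis of where the edge meets $n\NP(I)$ is needed.

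Your \emph{alternative} route, however, does not work as stated. The set $M(n)$ is not the set of lattice points on the three edges of $n\SP(I)$ meeting at $nP$: for $a=1$, $b=2$, $c=1$ the type~(1) generator at $v=1$ is $(n-1,1,n)$, which lies strictly in the interior of the face $u+v=n$ and on neither adjacent edge (since $\tfrac12 v+w=n+\tfrac12$ and $w+u=2n-1$). More decisively, the three edge lattice lengths you would compute are $1-\alpha$, $1-\beta$, $1-\gamma$, summing to $(3-\alpha-\beta-\gamma)n$, which agrees with $(\alpha+\beta+\gamma)n$ only when $\alpha+\beta+\gamma=\tfrac32$, i.e., in the Section~5 case $a=b=c=1$ where the floors in Proposition~\ref{prop:facegens} disappear and Proposition~\ref{edgegens} genuinely applies. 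Since your main argument does not depend on this alternative, the proposal stands.
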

\begin{proof} We use inclusion-exclusion principle.  Clearly, the individual types of the monomials given in Proposition \ref{prop:facegens} add up to $3(n-1)$, so we need to consider the monomials that are multiple types.  Note that from hereon, we are only interested in the leading coefficient, so we drop the -1 from $3(n-1)$.

We focus on the number of monomials that are type (1) and type (3).  This is the same as the pairs $(u, v)$ such that $1 \le u, v \le n - 1$, $an - av = u$, $v = \max\{ n - \left\lfloor \frac u a \right\rfloor, bn - bcn + bcu \}$, and $\max\{ n - \left\lfloor \frac v b \right\rfloor, cn + cav - can\} = cn - cu$.  We can determine this by determining the $v$ such that $1 \le v \le n-1$, $1 \le an - av \le n - 1$, $ v \ge bn - bcn + bc(an - av)$ and $n - \left\lfloor \frac v b \right\rfloor \le cn + cav - can$.  With some simplification, the latter two are satisfied when $v \ge n \beta$. 

Furthermore, every $n\beta \le v \le n -1$ satisfies $an - av \ge 1$.  Finally, if $v \ge n \beta$, then $a(n - v) \le a(n - n \beta) = an(1-\beta) = an \left(\frac \alpha a\right) = \alpha n < n$.  Since $an - av$ is an integer strictly less than $n$, we have $an - av \le n - 1$.

Thus, there are $n - 1 - \lceil n \beta \rceil$ possible generators of types (1) and (3).  This grows like $(1-\beta)n$.  Applying analogous arguments for the rest, we conclude that leading term of $\#M(n)$ is $3n - n(1 - \alpha + 1 - \beta + 1 - \gamma) = n(\alpha + \beta + \gamma)$.
\end{proof}

We obtain the immediate conclusion:

\begin{proof}[Proof of \ref{ex:sdefCalc}.(i)] Apply Propositions \ref{prop:nonFaceGen} and \ref{prop:faceGensGrowth}.
\end{proof}

From the data, we can verify Example \ref{ex:sdefCalc}(ii).

\begin{proof}[Proof of \ref{ex:sdefCalc}.(ii)]  Observe $\sdef_I(n) = \#M(n) + 0$ or $\#M(n) + 1$ depending on whether the point $R_n = (\lceil n \alpha \rceil, \lceil n \beta \rceil, \lceil n \gamma \rceil)$ is a minimal generator off of a face or not.  We will show that this occurs for $n$ if and only if it occurs for $n + \ell(abc + 1)$ for any $\ell \ge 1$. 
 Indeed, consider the first coordinate of $R_{n + \ell(abc + 1)}$:
\[
    \lceil (n + \ell(abc + 1))\alpha \rceil = \lceil n\alpha + \ell(a(bc - b + 1)) \rceil = \lceil n \alpha \rceil + a\ell(bc - b + 1).
\]
Thus, the first coordinate of $R_n$ and $R_{n + \ell(abc + 1)}$ are congruent modulo $a$.  If we assume $R_n$ is a minimal generator not on a face, then $\lceil n \alpha \rceil \not\equiv 0 \pmod a$.  This occurs if and only if the first coordinate $R_{n + \ell(abc + 1)}$ is not a multiple of $a$.  Applying the same argument for the other coordinates, $R_n$ is a minimal generator not on a face implies $R_{n + \ell(abc + 1)}$ is not on a face.  Finally, to see $R_{n + \ell(abc + 1)}$ is a minimal generator, consider the inequality $n \le \frac 1a \lceil n\alpha \rceil + \lceil n \beta \rceil < n + 1$, which holds  because $R_n$ is a minimal generator not on a face.  Then $n + \ell(abc + 1) \le \frac1a(\lceil (n + \ell(abc + 1))\alpha \rceil) + \lceil (n + \ell(abc + 1))\beta \rceil < n + \ell(abc + 1) + 1$ via simple addition, and the other inequalities hold too.

\end{proof}

\begin{ex} Let $I = (x^2, y) \cap (y^3, z) \cap (z^4, x) = (xyz, x^2z, y^3x, z^4y)$.  Then, $P = \left(\frac45, \frac 35, \frac 45\right)$, and $R_n$ is a minimal generator not on a face if and only if $n \equiv 3 \pmod 5$.  In particular,
\[
    \sdef_I(n) = \begin{cases} 0 & n = 1\\
    \frac {11}5 n - 2 & n \equiv 0 \pmod 5\\
    \frac{11}5 n - \frac 65 & n \equiv 1 \pmod 5\text{ and } n \ne 1 \\
    \frac {11}5 n - \frac 75 & n \equiv 2 \pmod 5\\
    \frac {11}5 n - \frac 35 & n \equiv 3 \pmod 5\\
    \frac {11}5 n - \frac 45 & n \equiv 4 \pmod 5\\
    \end{cases}.
\]

Note that, in this case, although $abc + 1 = 25$ is a quasi-period, it is not the minimal quasi-period of $\sdef_I$.
    
\end{ex}

\begin{rem} In Drabkin and Guerrieri's proof that (under the appropriate conditions) $\sdef_I$ is eventually quasi-polynomial, he determines that a quasi-period is the least common multiple of the degrees of the generators of $\mathcal R_s(I)$ as an $R$-algebra \cite{DG}.  However, the quasi-period calculated from Example \ref{ex:sdefCalc}(ii), $abc + 1$, looks nothing like a least common multiple.  This suggests perhaps a deeper, number-theoretical relationship between $a$, $b$, $c$, and  the degrees of the generators of $\mathcal R_s(I)$.
    
\end{rem}

\section{Open Questions}
We have seen some results on the growth of $\sdef_I(n)$, but we still have many unanswered questions about the function.  These include, but are not limited to:
\begin{question} Given an arbitrary monomial ideal $I$,
    \begin{enumerate}[1.]
        \item What is the degree of $\sdef_I(n)$?
        \smallskip
        
        \item What are the coefficients of $\sdef_I(n)$?
        \smallskip
        
        \item Are the leading coefficients of the quasi-polynomial on each branch equal?
        \smallskip
        
        \item What is the minimum period for the symbolic defect function?
        \smallskip
        
        \item How large does $n$ have to be before adhering to the quasi-polynomial structure?
\end{enumerate}
\end{question}

One can ask the very same questions about $\isdef_I(n)$.  Furthermore, as stated in Section 4, there is no immediately known relationship between growth rates of $\sdef_I(n)$ and $\isdef_I(n)$.  So, we also ask,

\begin{question} How do the growth rates of $\sdef_I(n)$ and $\isdef_I(n)$ compare?  For what ideals does $\isdef_I(n) = O(\sdef_I(n))$?
\end{question}

Lastly, the methods of Section 5 and 6 seem to be specific, but the natural question is,

\begin{question} Can the methods of Section 5 and/or 6 be expanded upon to calculate $\sdef_I(n)$ or $\isdef_I(n)$ for more general ideals?
\end{question}



\section{Acknowledgements}
I would like to especially thank my advisor, Mihai Fulger, for introducing the project to me and helping me along the way.  My research was partly funded by his Simons Collaboration Grant no.~579353.
I would also like to thank Benjamin Drabkin, Michael DiPasquale, Federico Galetto, Alexandra Seceleanu, and Adam Van Tuyl for their helpful feedback and suggestions.
We utilized Macaulay2, Mathematica, and Grapher to help develop this project.

\end{document}